\newtheorem{theorem}{Theorem}[section]
\newtheorem{lemma}[theorem]{Lemma}
\newtheorem{conjecture}[theorem]{Conjecture}
\newtheorem{question}[theorem]{Question}
\newtheorem{proposition}[theorem]{Proposition}
\newtheorem{corollary}[theorem]{Corollary}
\theoremstyle{definition}
\newtheorem{definition}[theorem]{Definition}
\theoremstyle{remark}
\newtheorem{remark}[theorem]{Remark}
\newtheorem{example}[theorem]{Example}
\newcommand{\C}{\mathcal C}
\newcommand{\D}{\mathcal D}
\newcommand{\E}{\mathcal E}
\newcommand{\U}{\mathcal U}
\newcommand{\R}{\mathbb R}
\DeclareMathOperator{\conv}{conv}
\DeclareMathOperator{\link}{Lk}
\DeclareMathOperator{\Tk}{Tk}
\DeclareMathOperator{\code}{code}
\newcommand{\nerve}{\mathcal N}
\newcommand{\od}{:=}
\newcommand{\blue}[1]{{\color{blue}#1}}
\newcommand{\Code}{\mathbf{Code}}
\newcommand{\ParCode}{\mathbf{P}_\Code}
\renewcommand{\blue}[1]{#1}
\begin{document}
\title{Sunflowers of Convex Open Sets}

\author{R. Amzi Jeffs}
\address{Department of Mathematics.  University of Washington, Seattle, Wa 98195}
\email{rajeffs@uw.edu}

\begin{abstract}
A \emph{sunflower} is a collection of sets $\{U_1,\ldots, U_n\}$ such that the pairwise intersection $U_i\cap U_j$ is the same for all choices of distinct $i$ and $j$. We study sunflowers of convex open sets in $\R^d$, and provide a Helly-type theorem describing a certain ``rigidity" that they possess. In particular we show that if $\{U_1,\ldots, U_{d+1}\}$ is a sunflower in $\R^d$, then any hyperplane that intersects all $U_i$ must also intersect $\bigcap_{i=1}^{d+1} U_i$. 
We use our results to describe a combinatorial code $\C_n$ for all $n\ge 2$ which is on the one hand minimally non-convex, and on the other hand has no local obstructions. Along the way we further develop the theory of morphisms of codes, and establish results on the covering relation in the poset $\ParCode$.
\end{abstract}

\thanks{Jeffs' research is partially supported by  graduate fellowship from NSF grant DMS-1664865}
\date{\today}
\maketitle

\section{Introduction and Motivation}

Helly's theorem states that if every $d+1$ sets in a collection of convex sets in $\R^d$ share a point, then there is a point common to all the sets. Helly's theorem, along with its relatives such as Radon's and Carath\'eodory's theorems, has \blue{numerous} consequences and generalizations, including fractional, colorful, and topological versions (see \cite{hellytoday, hellyrelatives, eckhoff, matousek} for an overview). Our main result is a new theorem in this lineage, which focuses exclusively on convex sets which satisfy the additional topological constraint that they are open.

We examine the structure of sunflowers of convex open sets. A \emph{sunflower} is a collection of sets $\{U_1,\ldots, U_n\}$ such that $U_i\cap U_j$ is nonempty and constant for distinct choices of $i$ and $j$. That is, $U_i\cap U_j = \bigcap_{k=1}^n U_k\neq\emptyset$ for all choices of distinct $i$ and $j$. The sets $U_i$ are called the \emph{petals} of the sunflower, and $\bigcap_{k=1}^n U_k$ is called the \emph{center} of the sunflower. Our main result is the following: 

\begin{theorem}\label{thm:sunflower}
Let $\{U_1,\ldots, U_{d+1}\}$ be a sunflower of convex open sets with center $U$  in $\R^d$. Then any hyperplane which intersects all $U_i$ must also intersect $U$. 
\end{theorem}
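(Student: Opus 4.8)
The plan is to argue by contraposition: assume $H$ meets every $U_i$ but misses the center $U$, and derive a contradiction with the sunflower relation $U_i\cap U_j=U$. Since $U$ is convex, hence connected, it lies in one open halfspace cut out by $H$; write $H=\{f=0\}$ for a linear functional $f$ with $U\subseteq H^-=\{f<0\}$. As each $U_i$ is open and meets $H$, it meets $H^+$ as well. Set $A_i=U_i\cap H$, a nonempty convex open subset of $H\cong\R^{d-1}$; the sunflower condition gives $A_i\cap A_j=U_i\cap U_j\cap H=U\cap H=\emptyset$, so $A_1,\dots,A_{d+1}$ are pairwise disjoint. Two preliminary normalizations help: first, replacing $f$ by $f-\sup_U f$ (note $-\infty<\sup_U f\le 0$) we may assume $\sup_U f=0$, and one checks the new hyperplane still meets every $U_i$ (each $U_i$ contains points of $U$ with $f$ arbitrarily close to $0$ from below, and a point with $f=0$, so by connectedness it meets $\{f=\sup_U f\}$); second, for any open convex $C$ with $C\cap H\ne\emptyset$ one has $\overline{C}\cap H=\overline{C\cap H}$ (if $p\in\overline{C}\cap H$ and $a\in C\cap H$ then $[a,p)\subseteq C\cap H$), so in particular $\overline{U_i}\cap H=\overline{A_i}$.

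Next I would split on whether $\overline U$ meets $H$. If $\overline U\cap H\ne\emptyset$, pick $p$ in it; then $p\in\overline U\subseteq\overline{U_i}$, so $p\in\overline{U_i}\cap H=\overline{A_i}$ for every $i$, i.e.\ $p\in\bigcap_i\overline{A_i}$. If $\overline U\cap H=\emptyset$, then since $\sup_U f=0$ is not attained, $U$ has a recession direction $v$ with $f(v)=0$ (take $u_n\in U$ with $f(u_n)\to0$; it escapes to infinity, and $u_n/\lvert u_n\rvert$ subconverges to such a $v$), and because $U\subseteq U_i$ this $v$ is a recession direction of each $U_i$ and hence of each slice $A_i$. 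For $d=2$ this already closes the argument: the $A_i$ are then three pairwise disjoint \emph{open intervals} of the line $H$, and such intervals can neither share a point of their closures (the middle one would degenerate to a point) nor all be unbounded in the common direction $v$ (any two would then overlap) — both are absurd. This disposes of the base case $d=2$ (the case $d=1$ being trivial).

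For $d\ge 3$ the plan is to reduce to dimension $d-1$, and this is where I expect the real difficulty. The natural attempt is to choose a hyperplane $G$ of $\R^d$ meeting $U$ and transverse to $H$; then $\{U_1\cap G,\dots,U_d\cap G\}$ is a sunflower of convex open sets in $G\cong\R^{d-1}$ with nonempty center $U\cap G$, and $L=G\cap H$ is a hyperplane of $G$, so if $L$ meets each $U_i\cap G$ the inductive hypothesis forces $L$ to meet $U\cap G$, whence $H\cap U\ne\emptyset$. \emph{The obstacle is producing such a $G$}, equivalently a $(d-2)$-flat $L\subseteq H$ that meets $d$ of the pairwise-disjoint slices $A_i$: for arbitrary disjoint convex sets no common transversal need exist, so one must exploit that the $A_i$ arise from a sunflower — they fan out from the single center $U$, and one has the extra point $p\in\bigcap_i\overline{A_i}$ (Case A) or the common recession direction $v$ (Case B) to steer $L$ with. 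An alternative, more hands-on route I would pursue in parallel is to fix $u\in U$ and $q_i\in U_i\cap H^+$, let $r_i=[u,q_i]\cap H\in A_i$ (these $d+1$ points of $H\cong\R^{d-1}$ are distinct, since $r_i=r_j$ would put a point of $H$ in $U$), take a Radon partition $\{1,\dots,d+1\}=I\sqcup J$ with Radon point $R\in\conv\{r_i:i\in I\}\cap\conv\{r_j:j\in J\}\subseteq H$, and observe that $(1-s)u+sR$ lies in $U$ for small $s>0$ because it is a convex combination of the points $(1-s)u+sr_i\in U_i$, which for small $s$ lie in \emph{every} $U_k$ and hence in $U$; pushing $s$ toward $1$ would place $R\in\overline U\cap H$, returning us to Case A. Either way, the crux — and the place the hypothesis of exactly $d+1$ petals is essential — is the final extraction of a contradiction from "$\overline U$ meets $H$" together with the sunflower structure, which I expect to require a careful combinatorial-convexity argument rather than a routine computation.
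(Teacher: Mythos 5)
Your setup is fine as far as it goes (the slices $A_i=U_i\cap H$ being pairwise disjoint, the normalization lemmas, and the base case $d\le 2$ are all correct), but the proof has a genuine and self-acknowledged gap exactly at the heart of the theorem: the inductive step for $d\ge 3$. You never produce the auxiliary hyperplane $G$ (equivalently the $(d-2)$-flat $L\subseteq H$ transversal to $d$ of the slices), and you give no argument that Case A ($\overline U\cap H\ne\emptyset$) is contradictory when $d\ge 3$. Note that Case A is \emph{not} absurd on the strength of what you have used: $d+1$ pairwise disjoint convex open sets in $H\cong\R^{d-1}$ can perfectly well share a common boundary point (e.g.\ the four open quadrants in $\R^2$), so disjointness of the $A_i$ plus $p\in\bigcap_i\overline{A_i}$ yields nothing; one must exploit the full sunflower structure, and that is precisely the argument you did not supply. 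For comparison, this is where the paper's proof does its real work: after perturbing $H$ so that $\overline U$ has a \emph{unique} nearest point $p$, it passes to the parallel hyperplane $H_0$ through $p$ (so $H_0\cap\overline U=\{p\}$ and $H_0$ still meets every $U_i$), builds a codimension-two flat $\widehat H\subseteq H_0$ strictly separating $p$ from points $\hat p_1,\ldots,\hat p_d$ of $d$ of the slices (pigeonhole plus openness), and then translates $H_0$ and $\widehat H$ slightly toward $U$ to obtain a sunflower of $d$ petals in $\R^{d-1}$ together with a hyperplane meeting all petals but missing the center, contradicting the inductive hypothesis. Some construction of this kind (or Rosen's Radon-based argument) is needed; without it your induction has no engine.

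The alternative ``hands-on'' Radon route also contains an unjustified step. You claim $(1-s)u+sR\in U$ and hope to push $s\to 1$ to get $R\in\overline U\cap H$, but your justification only works for small $s$, and there it is vacuous: the point tends to $u\in U$ and $U$ is open. The convex-combination identity $(1-s)u+sR=\sum_{i\in I}\lambda_i\bigl((1-s)u+sr_i\bigr)$ places the point in $\conv\bigl(\bigcup_{i\in I}U_i\bigr)$, not in $\bigcap_k U_k=U$; the sunflower condition is not invoked in any way that would force membership in every $U_k$ for $s$ near $1$. (Indeed, if such a one-line argument worked it would not use the hypothesis of $d+1$ petals versus the ambient dimension, which you correctly observe must enter somewhere.) And even granting the conclusion $R\in\overline U\cap H$, you would only have returned to Case A, which, as noted above, you have not refuted for $d\ge 3$. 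So both routes terminate at the same missing step.
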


\blue{This generalizes \cite[Lemma 3.2]{obstructions}, which is equivalent to Theorem \ref{thm:sunflower} in the $d=2$ case.}

\begin{example}
The figure below shows a sunflower of convex open sets with three petals in $\R^2$. As we can see, any hyperplane (i.e. line) which intersects all petals must also pass through the center of the sunflower. Theorem \ref{thm:sunflower} states that this phenomenon is general. \[
\includegraphics[scale=0.6]{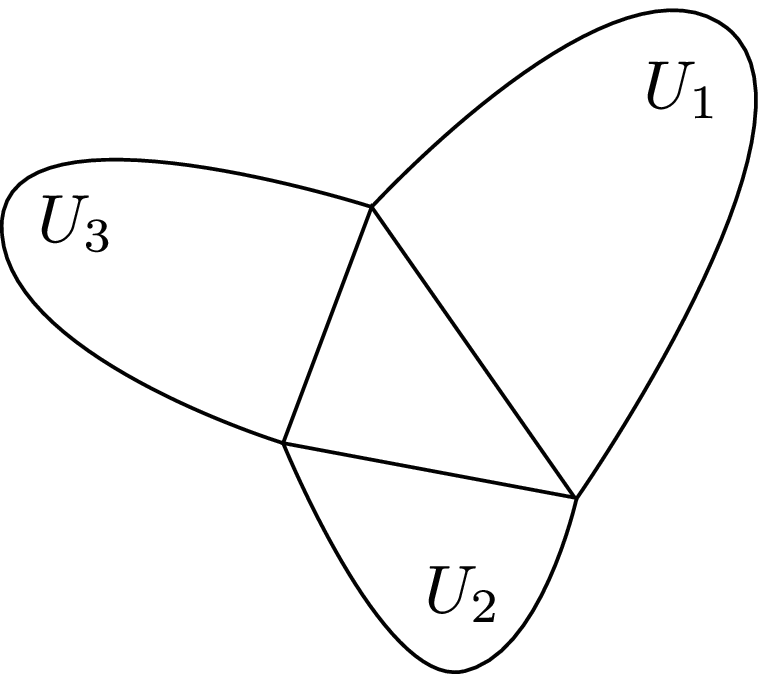}
\]
\end{example}

Theorem 1.1 contrasts typical Helly-type theorems in its topological assumption that all $U_i$ are open sets. This assumption is critical, since without it one can construct a counterexample by taking closed line segments meeting at a point in $\R^2$. \

The second main contribution of this paper is an application of Theorem \ref{thm:sunflower} to the theory of convex codes (sometimes also referred to in the literature as convex neural codes). We give a more detailed overview of convex codes in Section \ref{sec:morphisms}, but for now sketch an outline of our results. A \emph{combinatorial code} or simply \emph{code} is a subset of the Boolean lattice $2^{[n]}$. Informally, a \emph{convex code} is a combinatorial code which records the regions cut out by a collection of convex open sets in some Euclidean space. In \cite{neuralring13} the authors asked which combinatorial codes are convex. This question has been an active area of research in recent years, see for instance \cite{undecidability, openclosed, local15, obstructions}. 

Importantly, almost all known obstructions to convexity are based on ``local" information in a code, in the sense that they depend on links in a simplicial complex associated to a code. The first example of a non-local obstruction to convexity was given by \cite{obstructions}. In Section \ref{sec:Cn} we generalize this example significantly: for every $n\ge 2$, we describe a code $\C_n$ which has no local obstructions to convexity, but nevertheless is not convex. 

An important aspect of the codes $\C_n$ defined in Section \ref{sec:Cn} is that they are ``minimally non-convex," a concept we introduced in \cite{morphisms}. We defer a rigorous explanation of this concept to Section \ref{sec:morphisms}, and for now simply state that a minimally non-convex code is a non-convex code which has the property that any small change to it yields a convex code. Minimally non-convex codes may be thought of as the ``forbidden minors" of convexity. Where before only a single non-local obstruction to convexity was known, we provide an infinite family of such obstructions, and show moreover that these obstructions do not reduce to one another. 

The structure of the paper is as follows. In Section \ref{sec:proof} we prove Theorem \ref{thm:sunflower} and establish several corollaries which are useful in later sections, and may be of interest in their own right. Section \ref{sec:morphisms} is devoted to background on morphisms of codes, along with some new results which make investigating minimally non-convex codes somewhat easier. In particular, Section \ref{sec:morphisms} provides a deeper understanding of a certain covering relation in a poset consisting of combinatorial codes. Section \ref{sec:Cn} defines the family of codes $\C_n$ described above, and uses the results of previous sections to show that each $\C_n$ is minimally non-convex (see Theorem \ref{thm:mnc}). We conclude in Section \ref{sec:conclusion} with some directions for future work. 

\section{Discrete Geometry Results: Theorem \ref{thm:sunflower} and its Corollaries}\label{sec:proof}

Our main goal in this section is to prove Theorem \ref{thm:sunflower}. Below we provide a direct inductive proof, which makes use only of elementary tools. We credit Zvi Rosen with an independent proof of this result, \blue{which} relies on an application of Radon's theorem. 

\begin{proof}[Proof of Theorem \ref{thm:sunflower}]
First observe that we may restrict our attention to the case in which all $U_i$ are bounded by intersecting all $U_i$ with an open ball of sufficiently large radius. We then proceed by induction on $d$. When $d=1$ the claim is immediate, since a hyperplane in $\R^1$ is a single point. 

For $d\ge 2$, we work by contradiction. Suppose that we have a hyperplane $H$ which intersects all $U_i$, but avoids $U$. Since the $U_i$ are open, we may perturb the position and angle of $H$ slightly while maintaining this property. Such perturbations together with the fact that $U$ is bounded allow us to assume without loss of generality that there is a unique point $p$ in $\overline U$ which is closest to $H$. Then $p$ is a boundary point of $U$. We let $H_0$ be the hyperplane parallel to $H$ and passing through $p$ (possibly $H_0 = H$, if $p\in H$ to begin with). 

Note that $H_0\cap U = \emptyset$ since otherwise there would be a point in $\overline U$ which was closer to $H$ than $p$. Moreover, uniqueness of $p$ implies that $H_0\cap \overline U = \{p\}$. Also observe that $H_0$ intersects all $U_i$: choosing $q\in U$ and $p_i\in H\cap U_i$, we see that the line segment between these two points is contained in $U_i$ and intersects $H_0$. 

Next define $\widehat{U_i} = U_i\cap H_0$ for all $i$, and choose points $\hat q_i$ in $\widehat{U_i}$. Then choose an affine subspace $H' \subseteq H_0$ of dimension $d-2$ passing through the points $p$ and $\hat q_1, \ldots, \hat{q}_{d-2}$. Observe that $H' $ has codimension one in $H_0$, and so splits it into two regions. By pigeonhole principle, two of the three points in $\{\hat q_{d-1}, \hat q_d, \hat q_{d+1}\}$ will lie (perhaps not strictly) on one side of $H'$. Say without loss of generality that these two points are $\hat q_{d-1}$ and $\hat q_d$. Since all $\widehat{U_i}$ are relatively open in $H_0$, we may choose $\hat{p_i}\in \widehat{U}_i$ near $\hat{q}_i$ for $1\le i \le d$ so that $\hat p_1, \ldots, \hat{p}_{d}$ all lie \emph{strictly} on one side of $H'$. 

Now, let $\widehat{H}\subseteq H_0$ be a parallel translate of $H'$ so that $\hat p_1, \ldots, \hat{p}_{d}$ lie strictly on one side of $\widehat{H}$, while $p$ lies strictly on the other. We will show that $\widehat{H}$ intersects $\widehat U_i$ for $1\le i \le d$. To see this, consider the line segment from $\hat p_i$ to $p$ for $1\le i\le d$. We claim that every point on this line segment other than $p$ lies in $\widehat U_i$. Choose $q\in U$ and consider the line segment from $q$ to $p$. Since $p$ is a boundary point of $U$ it follows that it is the only point on this line segment not lying in $U$. Let $L$ denote the half-open line segment from $q$ to $p$ which does not include $p$. Then choose a neighborhood of $\hat p_i$ contained in $U_i$, and consider the convex hull of this neighborhood with $L$, as pictured below. \[
\includegraphics[scale=0.6]{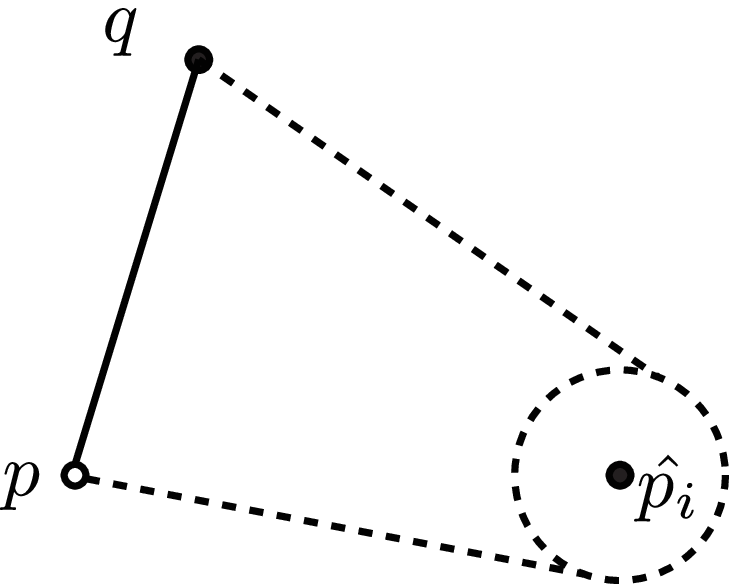}
\]
This convex hull is contained in $U_i$ since $L$ and the neighborhood of $\hat p_i$ are both contained in $U_i$. Moreover, all points strictly between $p$ and $\hat p_i$ are contained in this convex hull, and hence in $U_i$. These points are also contained in $H_0$, and hence in $\widehat{U_i}$. 
Since the line segment from $\hat p_i$ to $p$ crosses $\widehat{H}$, we conclude that $\widehat{H}$ contains a point in $\widehat U_i$.

 Since $p$ is the unique boundary point of $U$ lying in $H_0$, $\widehat{H}$ and $\overline U$ are disjoint. In particular, they are a positive distance from one another. Orient $H_0$ so that $U$ lies on its positive side, and for $\varepsilon >0$  let $H_\varepsilon$ denote the shift of $H_0$ by $\varepsilon$ in the positive direction. Moreover let $\widehat H_\varepsilon$ denote the shift of $\widehat H$ by the same amount, so that in particular $\widehat H_\varepsilon \subseteq H_\varepsilon$. Since $\widehat{H}$ is a positive distance from $\overline U$, we may choose $\varepsilon$ small enough so that $\widehat H_\varepsilon$ does not intersect $U$. Now consider the collection of relatively open convex sets $V_i = U_i\cap H_\varepsilon$ in $H_\varepsilon$. We may choose $\varepsilon$ small enough so that $U\cap H_\varepsilon \neq \emptyset$, which implies that the collection $\{V_1,\ldots, V_d\}$ is a sunflower in $H_\varepsilon\cong \R^{d-1}$. 

We claim that $\widehat H_\varepsilon$ contains points in each $V_i$ but not in their common intersection, contradicting our inductive hypothesis. Indeed, $\widehat H_\varepsilon\cap U = \emptyset$ by choice of $\varepsilon$, so $\widehat H_\varepsilon$ does not contain any points in the common intersection of the $V_i$. To see that $\widehat H_\varepsilon\cap V_i\neq \emptyset$ for small enough $\varepsilon$, we simply use the openness of $U_i$. In particular, $\widehat H$ contains a point in $U_i$ for $1\le i \le d$, so any appropriately small translate of $\hat H$ will have the same property. Thus we may choose a small $\varepsilon >0$ such that $\widehat H_\varepsilon$ contains a point in $U_i$, and hence in $V_i$ for $1\le i \le d$. This contradicts our inductive hypothesis and the result follows. 
\end{proof}

The following corollary may be viewed as a \blue{partial converse to B\'ar\'any's colorful Carath\'eodory theorem \cite{caratheodorygeneralization}.  The colorful Carath\'eodory theorem states that if $P_1,\ldots, P_n$ are sets in $\R^d$ with $n>d$ and $p\in \bigcap_{i=1}^n \conv(P_i)$, then there exist points $p_i\in P_i$ such that $p\in \conv\{p_1,\ldots, p_n\}$. Corollary \ref{cor:conv} says that if $\{\conv(P_1),\ldots, \conv(P_n)\}$ forms an open sunflower, then any choice of $p_i\in P_i$ has the property that $\conv\{p_1,\ldots, p_n\}$ contains a point the common intersection of all $\conv(P_i)$. Roughly, the colorful Carath\'eodory theorem says that any point $p$ lying in every $\conv(P_i)$ can be ``witnessed" as a convex combination of a single point from each $P_i$, while Corollary \ref{cor:conv} says that when we have a sunflower of convex open sets every choice of $p_i\in P_i$ witnesses a point $p$ lying in all $\conv(P_i)$.}

\begin{corollary}\label{cor:conv}
Let $\{U_1,\ldots, U_{n}\}$ be a sunflower of convex open sets with center $U$  in $\R^d$ where $n\ge d+1$, and let $p_i\in U_i$ for $1\le i \le n$. Then $\conv\{p_1,\ldots, p_{n}\}$ intersects $U$. 
\end{corollary}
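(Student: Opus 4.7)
My plan is to argue by contradiction, reducing to the case $n = d+1$, and then using hyperplane separation together with Theorem \ref{thm:sunflower} to derive a contradiction.

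First I would reduce to $n = d+1$. Any $(d+1)$-element subcollection of the petals is itself a sunflower with the same center $U$, since the pairwise and common intersections are unchanged by discarding petals. Because $\conv\{p_{i_1}, \ldots, p_{i_{d+1}}\} \subseteq \conv\{p_1, \ldots, p_n\}$, it suffices to show that the convex hull of any $d+1$ of the chosen points meets $U$. So I assume $n = d+1$ and set $K := \conv\{p_1, \ldots, p_{d+1}\}$.

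Next, suppose for contradiction that $K \cap U = \emptyset$. Since $K$ is compact convex and $U$ is open convex, the hyperplane separation theorem produces a hyperplane $H$ with $K$ contained in one closed half-space $H^+$ and $U$ contained in the opposite open half-space (openness of $U$ forces strict separation on its side). In particular $H \cap U = \emptyset$. I would then verify that $H$ meets every $U_i$, which contradicts Theorem \ref{thm:sunflower}. Fix any $q \in U \subseteq U_i$. If $p_i \in H$, then already $p_i \in U_i \cap H$. Otherwise $p_i$ lies strictly in $\interior(H^+)$ while $q$ lies strictly in the opposite open half-space, so the segment from $p_i$ to $q$, contained in $U_i$ by convexity, must cross $H$ inside $U_i$. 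Either way $U_i \cap H \neq \emptyset$, yielding the desired contradiction.

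I do not expect a serious obstacle here; the proof is essentially two invocations, separation and Theorem \ref{thm:sunflower}, glued together. The only mild subtlety is that when the $p_i$ are affinely dependent, $K$ may have dimension strictly less than $d$ and could even lie entirely inside $H$. This does not disrupt the argument, since all we need is $H \cap U = \emptyset$ together with each $p_i$ lying in $H^+$ and each $q \in U$ lying in the opposite open half-space, both of which are furnished by the separation step regardless of the dimension of $K$.
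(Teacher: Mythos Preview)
Your argument is correct and follows essentially the same route as the paper: reduce to $n=d+1$, separate $\conv\{p_1,\ldots,p_{d+1}\}$ from $U$ by a hyperplane, then use segments from each $p_i$ to a point of $U$ to show the hyperplane meets every petal, contradicting Theorem~\ref{thm:sunflower}. Your treatment is in fact slightly more careful than the paper's in justifying why $H\cap U=\emptyset$ (via openness of $U$) and in handling the case $p_i\in H$ explicitly.
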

\begin{proof}
It suffices to prove that $C = \conv\{p_1,\ldots, p_{d+1}\}$ intersects $U$. Suppose for contradiction that $C$ and $U$ are disjoint. Then there is a hyperplane $H$ weakly separating $C$ and $U$. By considering line segments from $p_i$ to points in $U$, we see that $H$ intersects $U_i$ for $1\le i \le d+1$. But by assumption $H$ does not intersect $U$, contradicting Theorem \ref{thm:sunflower}.
\end{proof}

\begin{corollary}\label{cor:plane}
Let $\{U_1,\ldots, U_{n+1}\}$ be a sunflower of convex open sets with center $U$ in $\R^d$, and let $k\le \min\{d-1, n-1\}$. Then any $k$-dimensional affine space $H$ intersecting all $U_i$ must intersect $U$.
\end{corollary}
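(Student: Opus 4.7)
The plan is to reduce Corollary~\ref{cor:plane} to the hyperplane case of Theorem~\ref{thm:sunflower} by restricting the entire sunflower to a carefully chosen $(k+1)$-dimensional affine subspace of $\R^d$ in which $H$ becomes a hyperplane. The crucial point is that this restricting subspace must be chosen to meet the center $U$, so that the restricted collection remains a sunflower with nonempty center.

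First I would pick any $q\in U$; if $q\in H$ the conclusion is immediate, so I may assume $q\notin H$. I then let $W$ denote the affine span of $H\cup\{q\}$, which has dimension exactly $k+1$. By construction $W$ contains $H$ together with the point $q$ of $U$, so each $W\cap U_i$ is nonempty (it contains $H\cap U_i$). It is then routine to verify that $\{W\cap U_1,\ldots,W\cap U_{n+1}\}$ is a sunflower of relatively open convex sets in $W\cong\R^{k+1}$ with center $W\cap U$: indeed, for distinct $i,j$ one has $(W\cap U_i)\cap(W\cap U_j)=W\cap(U_i\cap U_j)=W\cap U$, which contains $q$ and is therefore nonempty.

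Next, $H$ is a $k$-dimensional affine subspace of the $(k+1)$-dimensional space $W$, so $H$ is a hyperplane in $W$, and it still intersects every restricted petal $W\cap U_i$. Since $k\le n-1$ I have $n+1\ge k+2$, so I may discard extra petals to obtain a sub-sunflower of exactly $k+2$ petals (with the same center $W\cap U$) to which Theorem~\ref{thm:sunflower} applies inside $W\cong\R^{k+1}$. That theorem then gives $H\cap(W\cap U)\neq\emptyset$, and because $H\subseteq W$ this intersection equals $H\cap U$, completing the argument. I do not anticipate a serious obstacle; the only substantive choice is that $W$ must pass through a point of $U$, after which the sunflower structure descends automatically and the statement reduces cleanly to the hyperplane case already established.
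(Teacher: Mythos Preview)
Your proposal is correct and follows essentially the same approach as the paper: both arguments pass to a $(k+1)$-dimensional affine subspace containing $H$ together with a chosen point of $U$, observe that the restricted collection is still a sunflower in which $H$ is a hyperplane, trim the number of petals down to $k+2$ using $k\le n-1$, and then invoke Theorem~\ref{thm:sunflower}. The only cosmetic difference is that the paper deletes petals before restricting whereas you restrict first, and you explicitly separate out the trivial case $q\in H$; neither affects the substance.
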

\begin{proof} 
Since $k\le n-1$, we may delete sets in our sunflower until we have $k= n-1$. It suffices to prove the result in this case. Since $k \le d-1$, the space $H$ has positive codimension in $\R^d$ and we may consider a $(k+1)$-dimensional affine subspace $A$ containing $H$ and a point $p\in U$. Setting $V_i = U_i\cap A$, we obtain a sunflower $\{V_1,\ldots, V_{n+1}\}$ in $A\cong \R^{n}$. The hyperplane $H\subseteq A$ intersects all $V_i$, and so by Theorem \ref{thm:sunflower} $H$ intersects the center of $\{V_1,\ldots, V_{n+1}\}$. This center is contained in $U$, so $H$ intersects $U$ and the result follows.
\end{proof}

Theorem \ref{thm:sunflower} and the corollaries above echo Helly-type theorems, with the critical difference that our results apply only to open sets. It is natural to wonder whether there exist other Helly-type results that are specific to open sets. We leave this question open, and for now turn to applications of the above results to the theory of convex codes.

\section{Convex Codes, Morphisms, and Covering Relations in $\ParCode$}\label{sec:morphisms}

Our aim throughout the rest of the paper is to apply Theorem \ref{thm:sunflower} to the study of convex codes. We begin by recalling some standard definitions regarding codes. For a comprehensive review, see \cite{morphisms, neuralring13}. 

\begin{definition}
A \emph{code} or \emph{combinatorial code} is a subset of the Boolean lattice $2^{[n]}$. 
\end{definition}

\begin{definition}
Let $\U = \{U_1,\ldots, U_n\}$ be a collection of convex open sets in $\R^d$, and let $X\subseteq \R^d$ be a convex open set with $U_i\subseteq X$ for all $i$. We define the \emph{code} of $\{U_i\}_{i=1}^n$ in $X$ to be the code \[
\code(\{U_i\}_{i=1}^n, X) = \bigg\{\sigma\subseteq [n] \ \bigg|\  \bigcap_{i\in\sigma} U_i\setminus \bigcup_{j\notin \sigma}U_j \neq \emptyset\bigg\},
\]
where by convention the empty intersection is $X$.  The set $X$ is called the \emph{ambient space}, and the collection $\{U_i\}_{i=1}^n$ is called a \emph{convex realization} of $\code(\{U_i\}_{i=1}^n, X)$ in the ambient space $X$. If a code $\C$ has a convex realization, then we say that $\C$ is \emph{convex}. Given $\sigma\subseteq 2^{[n]}$, the region $\bigcup_{i\in\sigma} U_i\setminus \bigcup_{j\notin \sigma}U_j$ is called the \emph{atom} of $\sigma$. 
\end{definition}

One can think of a convex code as  recording the labels on the non-empty atoms of a collection of convex sets in $\R^d$. Classifying convex codes has been an active area of study in recent years, see for example \cite{undecidability, openclosed, local15, neuralring13, obstructions}. Despite this work, convex codes are very far from fully classified or understood. In \cite{morphisms} we introduced a notion of morphism of combinatorial codes, with the goal of isolating certain minimal obstructions to convexity. In this section we further develop this theory, with the aim of better describing the structure of the family of codes presented in Section \ref{sec:Cn}. We begin by recalling some of the definitions and results of \cite{morphisms}.

\begin{definition}\label{def:trunk}
Let $\C\subseteq 2^{[n]}$ be a code. For any $\sigma\subseteq[n]$, the \emph{trunk} of $\sigma$ in $\C$ is the set \[
\Tk_\C(\sigma) \od \{c\in \C \mid \sigma\subseteq c\}.
\] A subset of $\C$ is called a \emph{trunk in $\C$} if it is empty, or equal to $\Tk_\C(\sigma)$ for some $\sigma\subseteq[n]$. A trunk of the form $\Tk_\C(\{i\})$ will be called a \emph{simple trunk}, and denoted $\Tk_\C(i)$. 
\end{definition}

\begin{definition}\label{def:morphism}
Let $\C\subseteq 2^{[n]}$ and $\D\subseteq 2^{[m]}$ be codes. A function $f:\C\to\D$ is a \emph{morphism} if for every trunk $T$ in $\D$ the preimage $f^{-1}(T)$ is a trunk in $\C$. 
\end{definition}

The class of  codes together with morphisms forms a category $\Code$. This category provides useful information about convexity of codes. In particular, convexity is an isomorphism invariant, and in fact the image of a convex code under a morphism is again convex. Moreover, if $\C$ is a convex code, then so are all the trunks of $\C$. These results motivate the formation of a poset of combinatorial codes, in which codes are comparable to one another via surjective morphisms and taking trunks. We make this notion precise below. 

\begin{definition}\label{def:PCode}
An \emph{operation} on a code $\C$ consists of replacing $\C$ by one of its trunks, or by its image under some morphism. 
For isomorphism classes of codes $[\C]$ and $[\D]$, we say that $[\C]\le [\D]$ if there is a series of operations taking $\D$ to $\C$. The relation $[\C]\le [\D]$ forms a partial order on isomorphism classes of codes. The poset consisting of isomorphism classes of codes ordered by this relation is denoted $\ParCode$. 
\end{definition}

One of the main results of \cite{morphisms} is the following: 
\begin{theorem}[\cite{morphisms}]
The collection of isomorphism classes of convex codes forms a down-set in $\ParCode$. Equivalently, if $[\C]\le [\D]$ and $\D$ is convex, then so is $\C$. 
\end{theorem}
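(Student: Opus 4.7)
The plan is to reduce the theorem to the two elementary facts already announced in the paragraph preceding its statement: that convexity is preserved under (i) passing to a trunk of a code, and (ii) taking the image of a code under a morphism. By Definition \ref{def:PCode}, the relation $[\C] \le [\D]$ on $\ParCode$ is generated by precisely these two operations together with isomorphism, under which convexity is trivially invariant (simply relabel the convex sets in a realization). Given a chain of operations taking $\D$ down to $\C$, a straightforward induction on its length reduces the statement to showing that each single operation preserves convexity.

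For the trunk-preservation step, suppose $\C$ has a convex realization $\{V_1,\ldots,V_n\}$ in ambient space $X$, and let $\sigma\subseteq[n]$. I would replace the ambient space by $X' := X\cap \bigcap_{i\in\sigma} V_i$, which is itself convex and open, and use the restricted sets $V_i\cap X'$ as the new realization. The atoms appearing in $X'$ are precisely those labeled by codewords $c\in\C$ satisfying $\sigma\subseteq c$, and for $i\in\sigma$ the set $V_i\cap X'$ equals $X'$ and so contributes $i$ to every surviving codeword. Thus this realization has code $\Tk_\C(\sigma)$. For the morphism-preservation step, let $f:\C\to\D$ be a surjective morphism with $\D\subseteq 2^{[\ell]}$ and $\C$ realized by $\{V_1,\ldots,V_n\}$ in $X$. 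Since preimages of simple trunks are trunks, for each $j\in[\ell]$ there exists $\sigma_j\subseteq[n]$ with $f^{-1}(\Tk_\D(j))=\Tk_\C(\sigma_j)$. Set $W_j:=\bigcap_{i\in\sigma_j} V_i$ (taken to be $X$ if $\sigma_j=\emptyset$); each $W_j$ is convex and open. The core calculation is that for any $x$ lying in the $\C$-atom of a codeword $c$, one has $x\in W_j \iff \sigma_j\subseteq c \iff c\in\Tk_\C(\sigma_j) \iff j\in f(c)$, so $x$ lies in the $\D$-atom of $f(c)$ in the new realization. Hence $\code(\{W_j\}_{j=1}^\ell, X)=f(\C)=\D$.

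The main subtlety will be the atom-matching verification in the morphism step: one must check not only that each $\C$-atom of a codeword $c$ is sent into the $\D$-atom of $f(c)$, but also that no spurious $\D$-atoms appear. This is handled by noting that the $\C$-atoms partition $X$ (every point of $X$ lies in the atom of a unique codeword of $\C$), and that $W_j$-membership of any point depends only on its $V_i$-memberships, hence only on which $\C$-atom contains it. Consequently the nonempty $\D$-atoms are exactly the images of the nonempty $\C$-atoms under $f$, completing the argument.
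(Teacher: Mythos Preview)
The paper does not actually prove this theorem: it is quoted from \cite{morphisms}, and the paragraph immediately preceding it already asserts without proof the two facts you use --- that trunks of convex codes are convex, and that images of convex codes under morphisms are convex. Your proposal supplies exactly those two arguments and combines them via induction on the length of the chain in Definition~\ref{def:PCode}, which is the natural (and standard) approach. The geometric constructions you give --- restricting the ambient space to $X' = X\cap V_\sigma$ for trunks, and building $W_j = V_{\sigma_j}$ from the defining trunks of $f$ for morphisms --- are correct, and your atom-matching verification is sound.

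One very small technicality: when you write ``there exists $\sigma_j\subseteq[n]$ with $f^{-1}(\Tk_\D(j))=\Tk_\C(\sigma_j)$,'' Definition~\ref{def:trunk} allows a trunk to be empty without being of the form $\Tk_\C(\sigma)$ (e.g., if $\C = 2^{[n]}$). This is harmless here: since $f$ is surjective, $f^{-1}(\Tk_\D(j))=\emptyset$ forces $\Tk_\D(j)=\emptyset$, so you may simply set $W_j=\emptyset$ in that case. With this caveat noted, your proof is correct and complete.
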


\begin{definition}
Let $\C$ be a code. We say that $\C$ is \emph{minimally non-convex} if $\C$ is not a convex code, but every isomorphism class $[\D]< [\C]$ is convex.
\end{definition}

Minimally non-convex codes are the minimal elements of the complement of the down-set in $\ParCode$ consisting of convex codes. They can be thought of as the codes that form the ``boundary" between convexity and non-convexity in $\ParCode$. Thus to characterize all convex codes, it suffices to characterize these minimal obstructions to convexity. 

To prove that the family of codes presented in Section \ref{sec:Cn} are all minimally non-convex, we require an understanding of the covering relation in $\ParCode$. We develop this understanding below after reviewing some additional results from \cite{morphisms}. 

Given a collection of trunks $\{T_1,\ldots, T_m\}$ in a code $\C$, one can use them to define a morphism from $\C$ to $2^{[m]}$ by $c\mapsto \{j\in[m]\mid c\in T_j\}$. We will speak of this morphism as the \emph{morphism determined by} the collection of trunks $\{T_1,\ldots, T_m\}$. Sometimes we will abuse notation by ignoring the labels on the trunks in a collection, since up to permutation the labels do not affect the image of the code $\C$.

\blue{A useful fact} is that every morphism is determined by a set of trunks, as described by the following proposition.

\begin{proposition}[\cite{morphisms}]\label{prop:determined}
Let $\C\subseteq 2^{[n]}$ and $\D\subseteq 2^{[m]}$, and let $f:\C\to \D$ be a morphism. For $j\in[m]$ let $T_j = f^{-1}(\Tk_{\D}(j))$. Then for all $c\in \C$,\[
f(c) = \{j\in[m]\mid c\in T_j\}.
\]
That is, $f$ is the morphism determined by the collection $\{T_1,\ldots, T_m\}$. 
\end{proposition}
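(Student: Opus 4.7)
The proof is essentially a definition chase, so my plan is to unfold the notation and check a sequence of biconditionals. First I would observe that the simple trunk $\Tk_\D(j)$ is by Definition \ref{def:trunk} precisely the set of codewords of $\D$ that contain the element $j$, i.e.\ $\Tk_\D(j) = \{d \in \D \mid j \in d\}$. Since $f \colon \C \to \D$ is a morphism, its preimage $T_j = f^{-1}(\Tk_\D(j))$ is a trunk in $\C$ (so the statement about $f$ being the morphism \emph{determined by} $\{T_1,\ldots,T_m\}$ is actually meaningful), but for the stated equality all we need is the set-theoretic identity $T_j = \{c \in \C \mid j \in f(c)\}$.

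Next I would fix an arbitrary $c \in \C$ and compute the right-hand side of the claimed equation directly:
\[
\{j \in [m] \mid c \in T_j\} \;=\; \{j \in [m] \mid f(c) \in \Tk_\D(j)\} \;=\; \{j \in [m] \mid j \in f(c)\} \;=\; f(c),
\]
where the first equality uses the definition of preimage, the second uses the explicit description of $\Tk_\D(j)$ above, and the third simply writes $f(c) \subseteq [m]$ as the set of its elements. This yields the desired identity $f(c) = \{j \in [m] \mid c \in T_j\}$.

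There is no real obstacle here; the subtlety, if any, is purely bookkeeping — making sure that the morphism hypothesis on $f$ is invoked only to justify that each $T_j$ is genuinely a trunk (so that the phrase ``morphism determined by the collection $\{T_1,\ldots,T_m\}$'' makes sense), while the identity itself is forced by the definition of simple trunks and does not use the morphism property at all beyond $f$ being a function $\C \to 2^{[m]}$.
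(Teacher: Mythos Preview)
Your argument is correct. The chain of equalities
\[
\{j \in [m] \mid c \in T_j\} = \{j \in [m] \mid f(c) \in \Tk_\D(j)\} = \{j \in [m] \mid j \in f(c)\} = f(c)
\]
is exactly right, and your observation that the morphism hypothesis is used only to ensure that each $T_j$ is a trunk (so that the phrase ``morphism determined by $\{T_1,\ldots,T_m\}$'' is meaningful) is also on point.

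There is nothing to compare against here: the paper does not supply its own proof of this proposition. It is stated with a citation to \cite{morphisms} and used as background input, so the paper simply imports the result. Your definition-chase argument is the standard one and would serve perfectly well as a self-contained proof.
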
 

\blue{As a consequence, every morphism is a \emph{monotone} map: if $c_1\subseteq c_2$, then $f(c_1)\subseteq f(c_2)$.} A further useful fact is that the intersection of two trunks is again a trunk. In the rest of this section, we use intersections of trunks to build our understanding of the covering relation in $\ParCode$. We begin with a few definitions.

\begin{definition}
Let $\C$ be a code, and let $\{T_j\}$ be a collection of trunks in $\C$. A trunk $T\subseteq \C$ is \emph{generated} by $\{T_j\}$ if $T$ can be written as an intersection of various $T_j$. 
\end{definition}

\begin{definition}[\cite{morphisms}]\label{def:irreducibletrunk}
Let $\C$ be a code. A trunk $T\subseteq \C$ is called \emph{irreducible} if it is not the intersection of two trunks that properly contain it. Equivalently, $T$ is irreducible if every set of trunks that generates $T$ has $T$ as an element. 
\end{definition}

\begin{definition}[\cite{morphisms}]\label{def:reduced}
Let $\C\subseteq 2^{[n]}$ be a code. An index $i$ is called \emph{trivial} if $\Tk_\C(i) = \emptyset$. An index $i$ is called \emph{redundant} if there exists $\sigma\subseteq [n]\setminus \{i\}$ such that $\Tk_\C(i) = \Tk_\C(\sigma)$ (equivalently, if $\Tk_\C(i)$ is generated by $\{\Tk_\C(j)\mid j\neq i\}$). A code is called \emph{reduced} if it has no trivial or redundant indices. 
\end{definition}

A key result of \cite{morphisms} is that every code is isomorphic to a unique (up to permutation of indices) reduced code. Thus one can often restrict one's attention to reduced codes without loss of generality. A second useful result from \cite{morphisms} is that in a reduced code every simple trunk is irreducible. Below we establish \blue{two key lemmas} which we will use to investigate the structure of $\ParCode$.

\blue{
\begin{lemma}\label{lem:foftrunkintersection}
Let $f:\C\to \D$ be a morphism determined by trunks $\{T_j\}$. For any set of indices $\sigma$, $f\left(\bigcap_{j\in\sigma} T_j\right) = \bigcap_{j\in\sigma} f(T_j)$. 
\end{lemma}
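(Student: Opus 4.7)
The plan is to unpack the explicit formula from Proposition \ref{prop:determined}, which says $f(c) = \{j \mid c \in T_j\}$, and combine it with the fact that $T_j = f^{-1}(\Tk_\D(j))$. These two ingredients together essentially force the equality.

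For the forward containment $f\bigl(\bigcap_{j \in \sigma} T_j\bigr) \subseteq \bigcap_{j \in \sigma} f(T_j)$, I would simply take $c \in \bigcap_{j \in \sigma} T_j$ and note that, since $c \in T_j$ for every $j \in \sigma$, we have $f(c) \in f(T_j)$ for every such $j$. This half needs nothing beyond set-theoretic monotonicity of $f$.

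The reverse containment is the only part with any content. Suppose $d \in \bigcap_{j \in \sigma} f(T_j)$. For each $j \in \sigma$ there is some preimage of $d$ in $T_j$, and since $T_j = f^{-1}(\Tk_\D(j))$, every preimage of $d$ that lies in $T_j$ forces $d \in \Tk_\D(j)$, i.e., $j \in d$. Thus $\sigma \subseteq d$. Now pick \emph{any} single $c \in \C$ with $f(c) = d$ (one exists because $d$ already has a preimage in any one of the $T_j$). By the formula $f(c) = \{j \mid c \in T_j\}$, the condition $\sigma \subseteq d = f(c)$ translates directly into $c \in T_j$ for every $j \in \sigma$, hence $c \in \bigcap_{j \in \sigma} T_j$ and $d = f(c) \in f\bigl(\bigcap_{j \in \sigma} T_j\bigr)$.

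There is not much of an obstacle here; the only subtle point is the step where one passes from separate preimages $c_j$ (one for each $j \in \sigma$) to a single common preimage $c$, which is exactly where the formula $f(c) = \{j \mid c \in T_j\}$ does the work. I would flag that step carefully in the write-up, since it is the one place a reader might expect a nontrivial combinatorial argument but in fact the characterization of $f$ collapses it to a one-liner.
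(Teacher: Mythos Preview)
Your proof is correct and follows essentially the same route as the paper's: both dispense with the forward inclusion as set-theoretic triviality, then for the reverse inclusion use $f(T_j)\subseteq\Tk_\D(j)$ to deduce $\sigma\subseteq d$, pick any single preimage $c$ of $d$, and read off $c\in\bigcap_{j\in\sigma}T_j$ from the formula $f(c)=\{j\mid c\in T_j\}$. Your write-up is slightly more explicit than the paper's about why such a $c$ exists, which is a plus.
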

\begin{proof}
The inclusion $f\left(\bigcap_{j\in\sigma} T_j\right) \subseteq \bigcap_{j\in\sigma} f(T_j)$ is immediate for any function $f$. For the reverse inclusion, let $d\in\bigcap_{j\in\sigma} f(T_j)$, and let $c\in \C$ be such that $f(c) = d$. Recall from Proposition \ref{prop:determined} that $f(T_j) = \Tk_\D(j)$, and so $d\in \bigcap_{j\in\sigma}\Tk_\D(j) = \Tk_\D(\sigma)$. Thus $\sigma\subseteq d$. But again applying Proposition \ref{prop:determined}, we see that $d = f(c) = \{j\mid c\in T_j\}$. Since $\sigma\subseteq d$, this implies that $c\in T_j$ for all $j\in \sigma$. Thus $d\in f\left(\bigcap_{j\in\sigma} T_j\right)$ as desired.
\end{proof}}

\begin{lemma}\label{lem:factor}
Let $\C$ be a code, and $f:\C\to \D$ and $g:\C\to \E$ be surjective morphisms determined by trunks $\{T_j\}$ and $\{S_k\}$ respectively. Then there exists a morphism $h:\D\to\E$ such that $g = h\circ f$ if and only if every $S_k$ is generated by $\{T_j\}$.
\end{lemma}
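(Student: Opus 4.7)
The plan is to prove both directions separately, with the forward direction being a near-immediate consequence of Proposition \ref{prop:determined} and the backward direction requiring a careful construction of $h$.

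For the forward direction, suppose $h:\D\to\E$ exists with $g = h\circ f$. Since $h$ is a morphism, the preimage $h^{-1}(\Tk_\E(k))$ is a trunk in $\D$, say $h^{-1}(\Tk_\E(k)) = \Tk_\D(\tau_k)$ for some $\tau_k\subseteq [m]$. Then I would compute
\[
S_k = g^{-1}(\Tk_\E(k)) = f^{-1}\left(h^{-1}(\Tk_\E(k))\right) = f^{-1}(\Tk_\D(\tau_k)) = f^{-1}\left(\bigcap_{j\in\tau_k} \Tk_\D(j)\right) = \bigcap_{j\in\tau_k} T_j,
\]
where the last equality uses that preimage commutes with intersection and that $T_j = f^{-1}(\Tk_\D(j))$. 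This exhibits $S_k$ as generated by $\{T_j\}$.

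For the backward direction, suppose $S_k = \bigcap_{j\in\tau_k} T_j$ for each $k$. Using surjectivity of $f$, I would define $h:\D\to\E$ by choosing, for each $d\in\D$, any $c\in\C$ with $f(c) = d$ and setting $h(d) \od g(c)$. The key step is well-definedness, which I expect to be the main (if modest) obstacle. Using Proposition \ref{prop:determined}, membership $c\in S_k$ is equivalent to $c\in T_j$ for all $j\in\tau_k$, which is equivalent to $\tau_k\subseteq f(c)$. Hence membership in $S_k$ depends only on $f(c)$, so $g(c) = \{k\mid \tau_k\subseteq f(c)\}$ depends only on $f(c)$, and $h$ is well-defined. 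By construction $g = h\circ f$.

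It remains to show $h$ is a morphism. For this I would verify that $h^{-1}$ of a simple trunk $\Tk_\E(k)$ is a trunk in $\D$; intersecting such preimages then handles arbitrary trunks. Chasing definitions,
\[
h^{-1}(\Tk_\E(k)) = \{d\in\D \mid k\in h(d)\} = \{d\in\D\mid \tau_k\subseteq d\} = \Tk_\D(\tau_k)\cap \D,
\]
which is a trunk in $\D$. This finishes the argument. Lemma \ref{lem:foftrunkintersection} could also be invoked directly to streamline the well-definedness check, but the concrete computation above is equally short and makes the construction of $h$ transparent.
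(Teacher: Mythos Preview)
Your proof is correct and follows essentially the same approach as the paper. The forward direction is identical; in the backward direction the paper defines $h$ as the morphism determined by the trunks $Q_k = f(S_k)$ (using Lemma~\ref{lem:foftrunkintersection} to see $Q_k = \Tk_\D(\tau_k)$) and then verifies $g = h\circ f$, whereas you define $h$ by lifting through $f$ and then verify it is a morphism---but the resulting map and the underlying computations are the same.
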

\begin{proof}
$(\Rightarrow)$ Suppose that $h:\D\to\E$ is such that $g = h \circ f$. For each $k$ we have that \[
S_k = g^{-1}(\Tk_\E(k)) = f^{-1}(h^{-1}(\Tk_\E(k))).
\]
But $h^{-1}(\Tk_\E(k))$ is a trunk in $\D$, so it can be written as an intersection of simple trunks in $\D$. That is, there exists a set of indices $\sigma$ such that $h^{-1}(\Tk_\E(k)) = \bigcap_{j\in\sigma} \Tk_\D(j)$. Putting this into our expression above we get \[
S_k = f^{-1}\left(\bigcap_{j\in\sigma} \Tk_\D(j)\right) = \bigcap_{j\in\sigma} f^{-1}(\Tk_\D(j)) = \bigcap_{j\in\sigma} T_j.
\]
Thus $S_k$ is generated by $\{T_j\}$ as desired.

$(\Leftarrow)$ Suppose that each $S_k$ is generated by $\{T_j\}$. For each $k$ fix a set of indices $\sigma_k$ such that $S_k  = \bigcap_{j\in \sigma_k} T_j$. Define $Q_k = f(S_k)\subseteq \D$. We claim that each $Q_k$ is a trunk in $\D$. To see this, note  that \[
Q_k = f(S_k) = f\left(\bigcap_{j\in \sigma_k} T_j\right) =  \bigcap_{j\in \sigma_k} f(T_j),
\]
\blue{where the last equality follows from Lemma \ref{lem:foftrunkintersection}.} But $f(T_j) = \Tk_\D(j)$ by surjectivity of $f$, so the last term above is an intersection of trunks in $\D$. Thus $Q_k$ is a trunk.

 Let $n$ be such that $\{Q_k\} = \{f(S_k)\}$ is indexed by $[n]$, and note that $\E\subseteq 2^{[n]}$. Let $h:\D \to 2^{[n]}$ be the morphism determined by $\{Q_k\}$. We claim that $h(\D) \subseteq \E$ so that we may regard $h$ as a morphism from $\D$ to $\E$, and further that $g = h\circ f$ with this restriction. To see this, note that for $c\in \C$ \[
g(c) = \{k \mid c\in S_k\} = \{k \mid f(c)\in f(S_k)\} = \{k \mid f(c)\in Q_k\} = h(f(c)).
\]
The second equality above follows from the fact that $S_k$ is generated by $\{T_j\}$ and that $f$ is determined by $\{T_j\}$. This proves the result. 
\end{proof}

\begin{corollary}\label{cor:redundant}
Let $\C$ be a code and $\{T_j\}$ a collection of trunks in $\C$. Suppose that $T$ is a trunk generated by $\{T_j\}$, and let $f$ and $g$ be the morphisms determined by the collections $\{T_j\}$ and $\{T_j\}\cup \{T\}$ respectively. Then $f(\C)$ and $g(\C)$ are isomorphic.
\end{corollary}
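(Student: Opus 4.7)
The plan is to apply Lemma \ref{lem:factor} in both directions to obtain mutually inverse morphisms between $f(\C)$ and $g(\C)$. Throughout, we regard $f$ and $g$ as surjective morphisms onto their respective images $f(\C)$ and $g(\C)$.

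First I would produce a morphism $h: f(\C) \to g(\C)$ with $g = h \circ f$. By Lemma \ref{lem:factor}, such an $h$ exists provided that every trunk in the generating collection for $g$, namely $\{T_j\} \cup \{T\}$, is generated by the collection $\{T_j\}$ defining $f$. Each $T_j$ is generated by $\{T_j\}$ trivially (as the intersection of a single element), and $T$ is generated by $\{T_j\}$ by the hypothesis on $T$. So $h$ exists.

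Next I would reverse the roles and produce a morphism $h': g(\C) \to f(\C)$ with $f = h' \circ g$. Again by Lemma \ref{lem:factor}, this requires only that each $T_j$ be generated by $\{T_j\} \cup \{T\}$, which is immediate.

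Finally, from $g = h \circ f$ and $f = h' \circ g$ we deduce $f = h' \circ h \circ f$ and $g = h \circ h' \circ g$. Since $f$ and $g$ are surjective onto $f(\C)$ and $g(\C)$ respectively, this forces $h' \circ h = \operatorname{id}_{f(\C)}$ and $h \circ h' = \operatorname{id}_{g(\C)}$. Hence $h$ is an isomorphism between $f(\C)$ and $g(\C)$. The whole argument is essentially formal once Lemma \ref{lem:factor} is in hand; the only substantive step is verifying the generation hypothesis in the ``forward'' direction, which follows directly from the assumption that $T$ is generated by $\{T_j\}$.
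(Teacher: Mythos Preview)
Your proposal is correct and follows essentially the same approach as the paper: apply Lemma \ref{lem:factor} in both directions to obtain morphisms $h:f(\C)\to g(\C)$ and $h':g(\C)\to f(\C)$, then conclude they are mutual inverses. The paper's proof is terser (it simply asserts the two maps are mutual inverses), while you spell out the surjectivity argument showing $h'\circ h = \operatorname{id}$ and $h\circ h' = \operatorname{id}$; this extra detail is welcome and entirely in line with the paper's reasoning.
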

\begin{proof}
The above lemma yields surjective maps $h_1:f(\C)\to g(\C)$ and $h_2:g(\C) \to f(\C)$ which are mutual inverses. This gives the desired isomorphism. 
\end{proof}

Consider a morphism determined by trunks $\{T_j\}$ in a code $\C$. By the above corollary, the image of $\C$, up to isomorphism, can be obtained by repeatedly deleting various $T_j$ which are generated by $\{T_k\mid k\neq j\}$ and then considering the image of $\C$ under the morphism determined by the resulting collection. That is, we can remove various $T_j$ which are are ``redundant" to the other trunks in the collection. This will be used in our analysis in Section \ref{sec:Cn}.

 A further consequence of Lemma \ref{lem:factor} is a description of the covering relation in $\ParCode$, which we develop in Theorems \ref{thm:converse} and \ref{thm:cover}. We begin by describing some seemingly tangential properties of $\ParCode$, which will turn out to be useful later on.

\begin{proposition}\label{prop:nomoretrunks}
Let $f:\C\to\D$ be a surjective morphism of codes. Then the map $T\mapsto f^{-1}(T)$ is an injective map from the set of trunks in $\D$ to the set of trunks in $\C$. In particular, $\C$ has at least as many trunks as $\D$. 
\end{proposition}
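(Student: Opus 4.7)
The plan is to verify the two parts of the statement in turn: first that $T \mapsto f^{-1}(T)$ is indeed a well-defined function from trunks of $\D$ to trunks of $\C$, and second that this function is injective.

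Well-definedness is immediate from Definition \ref{def:morphism}: by the very definition of a morphism, the preimage of any trunk in $\D$ is a trunk in $\C$. So the map is well-defined on the set of trunks in $\D$.

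For injectivity, I would appeal to the standard set-theoretic fact that if $f \colon A \to B$ is any function and $T \subseteq B$, then $f(f^{-1}(T)) = T \cap f(A)$; when $f$ is surjective this simplifies to $f(f^{-1}(T)) = T$. Thus if $T_1$ and $T_2$ are trunks in $\D$ with $f^{-1}(T_1) = f^{-1}(T_2)$, applying $f$ to both sides yields $T_1 = T_2$. (The edge case where one or both $T_i$ is empty is handled uniformly by the same argument, or simply by noting that $f^{-1}(\emptyset) = \emptyset$ uniquely among preimages of trunks since surjectivity forces any nonempty trunk to have nonempty preimage.) The final ``in particular" statement is then immediate from the existence of an injection from one set to the other.

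There is no real obstacle here: the content lies almost entirely in unwinding the definition of morphism and using surjectivity to invert $f^{-1}$ on trunks. The proof will be very short.
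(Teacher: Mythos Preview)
Your proof is correct and follows exactly the same approach as the paper: the paper's proof also argues that if $f^{-1}(T) = f^{-1}(S)$ then surjectivity gives $T = f(f^{-1}(T)) = f(f^{-1}(S)) = S$. The only difference is that you make the well-definedness step explicit, which the paper leaves implicit.
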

\begin{proof}
Suppose that $T,S\subseteq \D$ are both trunks such that $f^{-1}(T) = f^{-1}(S)$. Surjectivity of $f$ then implies that $T = f(f^{-1}(T)) = f(f^{-1}(S)) = S$, proving the result. 
\end{proof}

\begin{proposition}\label{prop:surjectiveisom}
Let $f:\C\to \D$ be a surjective morphism of codes, and suppose that $\C$ and $\D$ have the same number of trunks. Then $f$ is an isomorphism.
\end{proposition}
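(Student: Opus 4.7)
The plan is to leverage Proposition \ref{prop:nomoretrunks} and then use the ``canonical witness'' trunks $\Tk_\C(c)$ to force injectivity. By Proposition \ref{prop:nomoretrunks}, the map $T\mapsto f^{-1}(T)$ is an injection from trunks of $\D$ into trunks of $\C$; since both sets of trunks are finite and of the same cardinality by hypothesis, this map is a bijection. The key consequence I want to extract is that \emph{every} trunk of $\C$ arises as the preimage of a trunk in $\D$.

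With that in hand, I would first prove injectivity of $f$. Suppose $f(c_1) = f(c_2)$. For any trunk $S\subseteq \C$, we can write $S = f^{-1}(T)$ for some trunk $T\subseteq \D$, so $c_1\in S$ iff $f(c_1)\in T$ iff $f(c_2)\in T$ iff $c_2\in S$. Thus $c_1$ and $c_2$ lie in exactly the same trunks of $\C$. Applying this to the particular trunk $\Tk_\C(c_1) = \{d\in\C\mid c_1\subseteq d\}$, which contains $c_1$, I conclude that $c_2\in\Tk_\C(c_1)$, giving $c_1\subseteq c_2$. Symmetrically $c_2\subseteq c_1$, so $c_1=c_2$. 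Hence $f$ is a bijection.

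It then remains to show that the set-theoretic inverse $f^{-1}\colon\D\to\C$ is itself a morphism, i.e., that $(f^{-1})^{-1}(S) = f(S)$ is a trunk in $\D$ for every trunk $S$ in $\C$. But by the bijection from the first step, $S = f^{-1}(T)$ for some trunk $T$ in $\D$, and then surjectivity of $f$ gives $f(S) = f(f^{-1}(T)) = T$, which is a trunk. So $f^{-1}$ is a morphism and $f$ is an isomorphism.

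I do not expect any real obstacle here; the only step that requires a moment of thought is recognizing that the trunks $\Tk_\C(c)$ for $c\in\C$ are exactly the right ``test objects'' to detect whether $f$ collapses two codewords. Once one notices that such a trunk distinguishes $c$ from any other $c'$ with $c\not\subseteq c'$, everything else is a formal consequence of Proposition \ref{prop:nomoretrunks} together with surjectivity.
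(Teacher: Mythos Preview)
Your argument is correct and follows essentially the same approach as the paper: both proofs first use Proposition \ref{prop:nomoretrunks} plus the equal-cardinality hypothesis to get a bijection on trunks, and then exploit the witness trunks $\Tk_\C(c)$ to establish injectivity. Your injectivity argument is a mild variant (you show $c_1,c_2$ lie in the same trunks and test against $\Tk_\C(c_1)$, whereas the paper uses monotonicity to identify $f(\Tk_\C(c_i))$ with $\Tk_\D(f(c_i))$), and your explicit verification that $f^{-1}$ is a morphism is a detail the paper leaves implicit.
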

\begin{proof}
We first prove that the map $T\mapsto f^{-1}(T)$ is a bijection on trunks. Proposition \ref{prop:nomoretrunks} states that this map is an injection, and since $\C$ and $\D$ have the same number of trunks we conclude it is a bijection. 

Since we know that $f$ induces a bijection on trunks, it suffices to prove that $f$ is a bijective function. By hypothesis $f$ is surjective, so we need only show injectivity. \blue{If $c_1\neq c_2$, then $\Tk_\C(c_1) \neq \Tk_\C(c_2)$, and since $f$ induces a bijection on trunks we conclude that $f(\Tk_\C(c_1)) \neq f(\Tk_\C(c_2))$. But since $f$ is monotone with respect to the partial order on $\C$ and $\D$ and $c_i$ is the unique minimal element of $\Tk_\C(c_i)$, we see that $f(c_i)$ is the unique minimal element of $f(\Tk_\C(c_i))$. In particular, $f(\Tk_\C(c_i)) = \Tk_\D(f(c_i))$. Thus $\Tk_\D(f(c_1)) \neq \Tk_\D(f(c_2))$, so $f(c_1)\neq f(c_2)$, and $f$ is injective as desired.}
\end{proof}

\begin{corollary}\label{cor:fewertrunks}
If $[\D] <[\C]$ in $\ParCode$, then $\D$ has strictly fewer trunks than $\C$. 
\end{corollary}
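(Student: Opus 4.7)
The plan is to show that the number of trunks of a code is an isomorphism invariant, and that each of the two allowed operations in $\ParCode$ either preserves the isomorphism class or strictly decreases the number of trunks. Since $[\D] < [\C]$ means there exists a chain of operations taking $\C$ to $\D$ but $\C\not\cong \D$, at least one step in the chain must strictly decrease the trunk count, and no step can increase it, yielding the result.

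First I would check isomorphism-invariance: if $f:\C\to\D$ is an isomorphism, Proposition \ref{prop:nomoretrunks} provides an injection $T\mapsto f^{-1}(T)$ from trunks of $\D$ into trunks of $\C$, and applying the same proposition to $f^{-1}$ gives the reverse injection. Hence the number of trunks depends only on the isomorphism class, and I can freely speak of ``the number of trunks of $[\C]$.''

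Next I would analyze the two operations. For the image operation: if $f:\C\to\D$ is a morphism, replace $\D$ by $f(\C)$ so that $f$ is surjective. Proposition \ref{prop:nomoretrunks} gives $|\mathcal{T}(f(\C))|\le |\mathcal{T}(\C)|$, and the contrapositive of Proposition \ref{prop:surjectiveisom} says that equality forces $f$ to be an isomorphism. So either $[f(\C)] = [\C]$ or $|\mathcal{T}(f(\C))| < |\mathcal{T}(\C)|$. For the trunk operation: I claim that for a trunk $T\subseteq \C$, the trunks of $T$ are exactly the trunks of $\C$ which happen to be contained in $T$. Indeed, any $\Tk_T(\sigma) = T\cap \Tk_\C(\sigma)$ is an intersection of two trunks in $\C$ (since $T$ itself is a trunk in $\C$) and hence a trunk in $\C$ contained in $T$; conversely, if $\Tk_\C(\sigma)\subseteq T$ then $\Tk_\C(\sigma) = T\cap\Tk_\C(\sigma) = \Tk_T(\sigma)$. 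Therefore $|\mathcal{T}(T)| \le |\mathcal{T}(\C)|$, with equality iff $\C$ itself is contained in $T$, i.e., $T = \C$. So either $T = \C$ (the operation is trivial) or $|\mathcal{T}(T)| < |\mathcal{T}(\C)|$.

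Finally, combining these observations, any finite chain of operations from $\C$ to $\D$ results in a code whose trunk count is at most that of $\C$, with strict inequality whenever some step is neither a trivial trunk-taking nor an isomorphism-inducing morphism. Since $[\D] < [\C]$ precludes all steps being trivial (else $[\D] = [\C]$), strict inequality $|\mathcal{T}(\D)| < |\mathcal{T}(\C)|$ follows. The main subtlety I expect is verifying the correspondence between trunks of a trunk $T$ and trunks of $\C$ contained in $T$; everything else is a direct application of the two preceding propositions.
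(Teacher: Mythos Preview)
Your proposal is correct and follows essentially the same approach as the paper: both arguments use Proposition~\ref{prop:nomoretrunks} and Proposition~\ref{prop:surjectiveisom} for the morphism case, and both show that trunks of a trunk $T$ are precisely the trunks of $\C$ contained in $T$ for the trunk-replacement case. The paper phrases this as a reduction to the covering case and exhibits a specific missed trunk $\Tk_\C(c)$ for $c\in\C\setminus T$, whereas you argue directly along an arbitrary chain of operations and deduce $T=\C$ from equality of trunk counts; these are the same idea in slightly different packaging, and your explicit handling of isomorphism-invariance and of the full chain is arguably a bit cleaner.
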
\begin{proof}
It suffices to prove the result in the case that $[\C]$ covers $[\D]$. In this case, either there is a surjective morphism $f:\C\to \D$ that is not an isomorphism, or $\D$ is isomorphic to a trunk in $\C$. In the former case observe that $\D$ has no more trunks than $\C$ by Proposition \ref{prop:nomoretrunks}. Moreover, since the map $\C\to \D$ is not an isomorphism, Proposition \ref{prop:surjectiveisom} implies that $\D$ must have strictly fewer trunks than $\C$. In the latter case, $\D$ is isomorphic to a trunk $T$ in $\C$. It suffices to show that $T$ has fewer trunks than $\C$. Since $T$ is not isomorphic to $\C$ it must be a proper trunk. Every trunk in $T$ is also a trunk in $\C$, but this association is not surjective since for any $c\in \C\setminus T$ the trunk $\Tk_\C(c)$ is not a trunk in $T$. Thus $T$ has fewer trunks than $\C$, proving the result. 
\end{proof}

\begin{corollary}\label{cor:onelesscovered}
If $[\D]<[\C]$ in $\ParCode$ and $\D$ has exactly one less trunk than $\C$, then $[\C]$ covers $[\D]$. 
\end{corollary}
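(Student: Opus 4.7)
The plan is to argue by contradiction, using Corollary \ref{cor:fewertrunks} as the main tool. The hypothesis gives a strict comparison $[\D] < [\C]$ together with a tight numerical constraint on trunks, and Corollary \ref{cor:fewertrunks} converts any strict comparison in $\ParCode$ into a strict decrease in the number of trunks. So if a covering failed, we could squeeze an intermediate class in and double-count the trunks we are allowed to lose.

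More concretely, suppose $[\C]$ does not cover $[\D]$. Then there is an isomorphism class $[\E]$ with $[\D] < [\E] < [\C]$ in $\ParCode$. Applying Corollary \ref{cor:fewertrunks} to the comparison $[\E] < [\C]$ shows that $\E$ has strictly fewer trunks than $\C$, and applying it to $[\D] < [\E]$ shows that $\D$ has strictly fewer trunks than $\E$. Chaining these two strict inequalities yields that $\D$ has at least two fewer trunks than $\C$, directly contradicting the assumption that $\D$ has exactly one less trunk than $\C$.

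I do not expect any real obstacle here; the entire argument is a one-line contradiction once Corollary \ref{cor:fewertrunks} is in hand. The only thing to be careful about is to make sure each of the two comparisons $[\D] < [\E]$ and $[\E] < [\C]$ really is strict, which is immediate from the definition of the covering relation, so that Corollary \ref{cor:fewertrunks} genuinely applies to both. After that, the count of trunks finishes the proof.
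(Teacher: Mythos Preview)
Your proposal is correct and follows essentially the same argument as the paper: assume a failure of covering, insert an intermediate class $[\E]$, and apply Corollary~\ref{cor:fewertrunks} to both strict inequalities to force $\D$ to have at least two fewer trunks than $\C$, a contradiction.
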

\begin{proof}
Suppose not. Then there exists a code $\E$ such that $[\D]<[\E]<[\C]$. By Corollary \ref{cor:fewertrunks} the number of trunks in $\D$ is less than the number in $\E$, which in turn is less than the number of trunks in $\C$. This is not possible since $\D$ has exactly one less trunk than $\C$.
\end{proof}

\begin{definition}\label{def:coveredcode}
Let $\C\subseteq 2^{[n]}$ be a reduced code and let $T_1,\ldots, T_n$ be the simple trunks in $\C$. For $i\in[n]$, the  \emph{$i$-th covered code} of $\C$, denoted $\C^{(i)}$, is the image of $\C$ under the morphism determined by the  following collection of trunks: \[\{T_j \mid j\neq i\} \cup \{T_j\cap T_i\mid j\neq i \text{ and } T_j\cap T_i \neq T_i\}.\] 
\end{definition}

The following theorem justifies this terminology.

\begin{theorem}\label{thm:converse}
Let $\C\subseteq 2^{[n]}$ be a reduced code and $i\in [n]$. Then $[\C^{(i)}]$ is covered by $[\C]$ in $\ParCode$. 
\end{theorem}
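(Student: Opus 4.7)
The plan is to invoke Corollary \ref{cor:onelesscovered}, which requires $[\C^{(i)}] < [\C]$ in $\ParCode$ together with the fact that $\C^{(i)}$ has exactly one fewer trunk than $\C$. The comparison $[\C^{(i)}] \le [\C]$ is immediate from Definition \ref{def:PCode}, as $\C^{(i)}$ is by construction the image of $\C$ under a morphism; the strict inequality will fall out for free once the trunk count is established, since codes with different trunk counts cannot be isomorphic.

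To count trunks of $\C^{(i)}$, let $f\colon\C\to\C^{(i)}$ be the defining morphism and write $\mathcal{S} = \{T_j : j\neq i\}\cup\{T_j\cap T_i : j\neq i,\ T_i\not\subseteq T_j\}$ for its defining collection. By Proposition \ref{prop:nomoretrunks}, $T\mapsto f^{-1}(T)$ injects trunks of $\C^{(i)}$ into trunks of $\C$, and an application of Proposition \ref{prop:determined} in the spirit of Lemma \ref{lem:foftrunkintersection} identifies its image with $\{\emptyset\}$ together with all subsets of $\C$ obtainable as intersections of elements of $\mathcal{S}$. It therefore suffices to show that the trunks of $\C$ generated by $\mathcal{S}$ are exactly the trunks of $\C$ different from $T_i$.

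The ``generated'' direction is a short case analysis on a typical trunk $T_\sigma = \bigcap_{j\in\sigma}T_j$. If $i\notin\sigma$, then $T_\sigma$ is already an intersection of first-type elements of $\mathcal{S}$. If $i\in\sigma$, one absorbs any $T_j\supseteq T_i$ factors into $T_i$ itself, and, provided that some $j\in\sigma\setminus\{i\}$ satisfies $T_i\not\subseteq T_j$, one rewrites $T_\sigma = \bigcap_{j\in\sigma\setminus\{i\},\ T_i\not\subseteq T_j}(T_i\cap T_j)$, which is an intersection of second-type elements of $\mathcal{S}$. The only trunks left unreached by this procedure are those that coincide with $T_i$.

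The main obstacle, and the only place where reducedness of $\C$ enters, is ruling out that $T_i$ is itself generated by $\mathcal{S}$. The key observation is that any intersection involving even a single second-type element $T_j\cap T_i$ lies strictly inside $T_i$, because $T_i\not\subseteq T_j$ forces $T_j\cap T_i\subsetneq T_i$. Hence a putative generating expression for $T_i$ could use only first-type elements $\{T_j : j\neq i\}$, yielding $T_i=\Tk_\C(\sigma')$ for some $\sigma'\subseteq[n]\setminus\{i\}$ and contradicting the non-redundancy of $i$; non-triviality of $i$ analogously rules out $T_i=\emptyset$. Thus $T_i$ is the unique trunk of $\C$ not generated by $\mathcal{S}$, so $\C^{(i)}$ has exactly one fewer trunk than $\C$, and Corollary \ref{cor:onelesscovered} completes the proof.
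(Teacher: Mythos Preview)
Your proof is correct and follows essentially the same route as the paper: both invoke Corollary~\ref{cor:onelesscovered} and establish that $T\mapsto f^{-1}(T)$ is a bijection from trunks of $\C^{(i)}$ onto the trunks of $\C$ other than $T_i$, using Proposition~\ref{prop:nomoretrunks} for injectivity and the same case analysis for surjectivity. The only cosmetic difference is that the paper disposes of the possibility $T_i\in\text{image}$ by citing the irreducibility of simple trunks in a reduced code (a result from \cite{morphisms}), whereas you unpack that step directly from the definitions of redundant and trivial indices; the underlying argument is the same.
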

\begin{proof}
By Corollary \ref{cor:onelesscovered} it suffices to show that $\C^{(i)}$ has exactly one less trunk than $\C$. Let $f:\C\to \C^{(i)}$ be the surjective morphism defining $\C^{(i)}$, and for $j\in [n]$ let $T_j = \Tk_\C(j)$. We claim that the map $T\mapsto f^{-1}(T)$ is a bijection from trunks in $\C^{(i)}$ to the trunks in $\C$ that are not equal to $T_i$. 

Observe by Proposition \ref{prop:nomoretrunks} that this map is injective from trunks in $\C^{(i)}$ to trunks in $\C$. Suppose for contradiction that there exists $T\subseteq \C^{(i)}$ so that $f^{-1}(T) = T_i$. Since $\C$ is reduced $T$ must be nonempty. In particular, there exists $\sigma$ so that $T =\Tk_{\C^{(i)}}(\sigma)$. But then  \[
T_i = f^{-1}(T) = \bigcap_{k\in \sigma} f^{-1}(\Tk_{\C^{(i)}}(k)).
\]
By definition of $f$, each term $f^{-1}(\Tk_{\C^{(i)}}(k))$ is equal to either $T_j$ for some $j\neq i$, or equal to $T_j\cap T_i$ for some $j\neq i$ with $T_j\cap T_i \neq T_i$. Thus the above equality implies that $\Tk_\C(i)$ is generated by the set of trunks $\{T_j \mid j\neq i\} \cup \{T_j\cap T_i\mid j\neq i \text{ and } T_j\cap T_i \neq T_i\}$. This set of trunks does not include $T_i$ as an element, but by \cite[Theorem 2.7] {morphisms} $T_i$ is an irreducible trunk. This is a contradiction.

So far we have shown that $T\mapsto f^{-1}(T)$ is an injective map from trunks in $\C^{(i)}$ to trunks in $\C$ that are not equal to $T_i$. To argue surjectivity, let $S\subseteq \C$ be a trunk that is not equal to $T_i$. If $S$ is empty, then it is the preimage of the empty trunk. If $S$ is nonempty, then observe that $S$ can be written as a (possibly empty) intersection of trunks in the set $\{T_j \mid j\neq i\} \cup \{T_j\cap T_i\mid j\neq i \text{ and } T_j\cap T_i \neq T_i\}$. The image of any trunk in this set is again a trunk in $\C^{(i)}$, and so $S$ is the preimage of the trunk in $\C^{(i)}$ arising from the corresponding intersection of images of trunks. This proves that $T\mapsto f^{-1}(T)$ is a bijective map between the trunks in $\C^{(i)}$ and the trunks in $\C$ that are not equal to $T_i$. Thus $\C^{(i)}$ has exactly one less trunk than $\C$, and the result follows. 
\end{proof}

\begin{theorem}\label{thm:cover}
Let $\C$ and $\D$ be codes. If $[\C]$ covers $[\D]$ in $\ParCode$ then \begin{itemize}
\item[(i)] $\D$ is isomorphic to a simple trunk in $\C$, or 
\item[(ii)] $\D$ is isomorphic to $\C^{(i)}$ for some $i$.
\end{itemize}
\end{theorem}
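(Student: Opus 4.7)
The plan is to pass to a reduced model of $\C$ and then use the covering hypothesis to pin down the single operation that produces $\D$ from $\C$. Since the reduction of $\C$ is isomorphic to $\C$ in $\ParCode$, assume $\C$ is reduced. Any chain of operations witnessing $[\D] < [\C]$ has a first step at which the isomorphism class strictly decreases, and the covering hypothesis forces this single step to carry some representative of $[\C]$ to one of $[\D]$. Thus we may assume either $(\mathrm{i}')$ $\D$ is isomorphic to a proper trunk $T$ of $\C$, or $(\mathrm{ii}')$ there is a surjective morphism $f\colon \C\to \D$ that is not an isomorphism.

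For case $(\mathrm{i}')$, the goal is to show $T$ must be a simple trunk. If not, write $T = T_{i_1}\cap\cdots\cap T_{i_k}$ with $T_j=\Tk_\C(j)$, $k\ge 2$, and $T\subsetneq T_{i_1}$. Since $\C$ is reduced, $T_{i_1}$ is a proper subset of $\C$ (otherwise $T_{i_1}=\C=\Tk_\C(\emptyset)$ would make $i_1$ redundant), and a direct check shows $T$ is a trunk of $T_{i_1}$. This produces $[\D] = [T] < [T_{i_1}] < [\C]$, contradicting the covering hypothesis, so $T$ is simple and conclusion (i) holds.

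For case $(\mathrm{ii}')$, let $f$ be determined by trunks $\{S_1,\ldots,S_m\}$ with $S_k = f^{-1}(\Tk_\D(k))$. Preimage is an injection from trunks of $\D$ to trunks of $\C$, and its image is exactly the collection of trunks of $\C$ generated by $\{S_k\}$. Since $f$ is not an isomorphism, Proposition \ref{prop:surjectiveisom} gives a trunk of $\C$ not generated by $\{S_k\}$; because every trunk is an intersection of simple trunks, some simple trunk $T_i$ fails to be generated, which by irreducibility of simple trunks in a reduced code is equivalent to $T_i \notin \{S_1,\ldots,S_m\}$.

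The crux is to verify that the defining collection $\mathcal{D}_i$ of $\C^{(i)}$ generates exactly the trunks of $\C$ distinct from $T_i$: non-generation of $T_i$ itself follows from irreducibility and reducedness, while for a trunk $T = \bigcap_{j\in\sigma}T_j$ with $i\in\sigma$ and $T\neq T_i$ one shows that some $k\in \sigma\setminus\{i\}$ must satisfy $T_k\cap T_i\neq T_i$, which allows $T$ to be rewritten using elements of $\mathcal{D}_i$. Given this, each $S_k$ (being different from $T_i$) is generated by $\mathcal{D}_i$, so Lemma \ref{lem:factor} yields a factorization $f = h\circ g$ with $g\colon \C\to \C^{(i)}$ the canonical morphism. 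Thus $[\D]\leq [\C^{(i)}] < [\C]$ by Theorem \ref{thm:converse}, and covering forces $[\D] = [\C^{(i)}]$, giving conclusion (ii). The main obstacle is this trunk-generation bookkeeping; the remainder is a direct assembly of previously established lemmas.
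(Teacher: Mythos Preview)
Your proof is correct and follows essentially the same route as the paper's: reduce to a single operation, locate a simple trunk $T_i$ missing from the collection determining $f$, and use Lemma~\ref{lem:factor} to factor $f$ through $\C\to\C^{(i)}$. You are in fact more careful than the paper in two places---you explicitly dispose of the possibility that $\D$ is a proper \emph{non-simple} trunk (which the paper skips when it asserts ``Then $\D$ must be the non-isomorphic image of $\C$ under a morphism''), and you spell out why $\mathcal D_i$ generates exactly the trunks of $\C$ other than $T_i$---but the underlying strategy is the same.
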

\begin{proof}
Throughout we will let $T_1,\ldots, T_n$ denote the simple trunks of $\C$. Suppose that $[\C]$ covers $[\D]$ but (i) above does not hold. Then $\D$ must be the non-isomorphic image of $\C$ under a morphism. Let $f:\C\to \D$ denote such a morphism, and consider the trunks that determine $f$. Among these trunks, there must be some $T_i$ which does not appear, lest $f$ be an isomorphism. Then every trunk determining $f$ is an intersection of various $T_j$, and whenever $T_i$ appears in such an intersection the intersection is not equal to $T_i$. We then see by Lemma \ref{lem:factor} that the morphism $f$ factors through the map $\C\to \C^{(i)}$. Since $\D$ is covered by $\C$, the factoring map must be an isomorphism and $\D\cong \C^{(i)}$ as desired. 
\end{proof}

\begin{remark}
It is important to note that the converse of the above theorem does not hold in general. Although Theorem \ref{thm:converse} states that each $[\C^{(i)}]$ is covered by $[\C]$ when $\C$ is reduced, the same may not be true of trunks. Consider the code $\C = \{23, 2, 3, 1, \emptyset\}$. Here we have $\Tk_\C(1) = \{1\}$, and $[\C]$ does not cover $[\Tk_\C(1)]$. This follows from the fact that $\Tk(2) = \{2,23\}$, and the isomorphism class of $\Tk(2)$ trunk lies  strictly between $[\C]$ and $[\Tk_\C(1)]$ in $\ParCode$. 
\end{remark}

\begin{remark}
The arguments used in Theorems \ref{thm:converse} and \ref{thm:cover} imply that if we partially order isomorphism classes of codes via surjective maps (as opposed to surjective maps \emph{and} replacement by trunks, as in Definition \ref{def:PCode}), then $[\C]$ covers $[\D]$ if and only if $\D\cong \C^{(i)}$ for some $i$. The resulting poset is then graded, with the rank function being equal to the number of trunks in a code. 
\end{remark}

We require one last tool before proceeding to Section \ref{sec:Cn}. Recall that a code is max-intersection complete if it contains all intersections of maximal codewords. The lemma below describes when the image of a code is max-intersection complete. This proves useful because by \cite{openclosed}, a max-intersection complete code is always convex. For the proof below, recall from \cite{morphisms} that morphisms respect the partial order on codewords given by inclusion, so that in particular maximal codewords are mapped to maximal codewords in the image. Also recall that for any code $\C$, its intersection completion is denoted $\widehat{\C}$ and the set of its maximal codewords is denoted $M(\C)$.  

\begin{lemma}\label{lem:imagemic}
Let $\C\subseteq 2^{[n]}$ and choose $\sigma_1,\ldots, \sigma_m$ with $\sigma_j\subseteq[n]$ for all $j\in[m]$. For $j\in[m]$ define $T_j = \Tk_\C(\sigma_j)$, and let $f:\C\to 2^{[m]}$ be the morphism determined by the collection $\{T_j\}$. Then $f(\C)$ is max-intersection complete if and only if the following holds: for every $\sigma \in \widehat{M(\C)}\setminus \C$ there exists $c\in \C$ such that $\sigma_j\subseteq c$ if and only if $\sigma_j\subseteq \sigma$ for all $j\in[m]$. 
\end{lemma}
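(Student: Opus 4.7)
The plan is to translate both sides of the equivalence into the explicit formula for $f$ from Proposition~\ref{prop:determined}, and then match intersections of maximal codewords of $f(\C)$ with intersections of maximal codewords of $\C$. By Proposition~\ref{prop:determined}, for any $c\in\C$ one has $f(c) = \{j\in[m]\mid c\in T_j\} = \{j\in[m]\mid \sigma_j\subseteq c\}$, so the image depends only on which $\sigma_j$ are contained in $c$.

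First I would identify the maximal codewords of $f(\C)$. Since $f$ is surjective and monotone, and every codeword of $\C$ extends to a maximal one, every maximal codeword of $f(\C)$ has the form $f(c)$ for some $c\in M(\C)$. Next I would compute an intersection of such images: for $c_1,\ldots, c_k\in M(\C)$,
\[
\bigcap_{i=1}^k f(c_i) = \{j\in[m]\mid \sigma_j\subseteq c_i \text{ for all } i\} = \{j\in[m]\mid \sigma_j\subseteq \sigma\},
\]
where $\sigma = \bigcap_i c_i \in \widehat{M(\C)}$. Thus every intersection of maximals of $f(\C)$ has this form, and conversely every element of $\widehat{M(\C)}$ realizes such an intersection.

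The question of whether such an intersection lies in $f(\C)$ then becomes the question of whether there exists $c\in\C$ with $f(c) = \{j\mid \sigma_j\subseteq \sigma\}$; by the formula for $f$, this is exactly the condition in the statement: $\sigma_j\subseteq c$ if and only if $\sigma_j\subseteq \sigma$ for every $j\in[m]$. When $\sigma\in\C$ we may simply take $c = \sigma$ and the condition is automatic, so the only substantive obligations come from $\sigma\in\widehat{M(\C)}\setminus \C$. Putting this together yields both directions: if $f(\C)$ is max-intersection complete, then each $\sigma\in\widehat{M(\C)}\setminus\C$ supplies the required witness $c$; conversely, the hypothesized condition produces a preimage in $\C$ for every intersection of maximals in $f(\C)$.

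I do not anticipate a substantial obstacle. The one point that requires care is the characterization of $M(f(\C))$, where one must verify that surjectivity of $f$ together with the fact that every codeword of $\C$ extends to a maximal codeword forces every maximal of $f(\C)$ to arise as $f(c)$ for some $c\in M(\C)$. Once this is in hand, the rest is a direct unraveling of definitions.
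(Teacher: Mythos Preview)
Your proposal is correct and follows essentially the same approach as the paper: both directions translate the condition on $f(\C)$ into the explicit description $f(c)=\{j\mid \sigma_j\subseteq c\}$, compute $\bigcap_i f(c_i)=\{j\mid \sigma_j\subseteq \bigcap_i c_i\}$, and reduce max-intersection completeness to the existence of a witness $c\in\C$. If anything, your version is slightly more careful: you explicitly justify that every maximal codeword of $f(\C)$ is $f(c)$ for some $c\in M(\C)$, whereas the paper's $(\Leftarrow)$ direction picks arbitrary preimages $c_i$ of the maximal $d_i$ and tacitly relies on this point to ensure $\sigma=\bigcap c_i\in\widehat{M(\C)}$.
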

\begin{proof}
$(\Rightarrow)$ Suppose that $f(\C)$ is max-intersection complete, and let $\sigma\in \widehat{M(\C)}\setminus \C$. Then $\sigma$ is the intersection of maximal codewords $c_1,\ldots, c_k$ in $\C$. Let $d\in f(\C)$ be the intersection of the various $f(c_i)$, and let $c\in \C$ be such that $f(c) = d$. We claim that $\sigma_j\subseteq c$ if and only if $\sigma_j\subseteq \sigma$. First note that $\sigma_j\subseteq c$ if and only if $j\in d$. This happens if and only if $j\in f(c_i)$ for all $i$, which in turn is equivalent to $c_i\in T_j$ for all $i$. But this is equivalent to $\sigma_j\subseteq \sigma$ since $\sigma$ is the intersection of the $c_i$. The result follows.

$(\Leftarrow)$ Let $d_1,\ldots, d_k$ be maximal codewords in $f(\C)$, and let $\tau$ be their intersection. Let $c_1,\ldots, c_k$ be such that $f(c_i) = d_i$, and let $\sigma$ be the intersection of the $c_i$. By hypothesis there exists $c\in \C$ such that $\sigma_j\subseteq c$ if and only if $\sigma_j\subseteq\sigma$. We claim that $f(c) = \tau$, so that $f(\C)$ is max-intersection complete. First observe that  $j\in \tau$ if and only if $j\in d_i$ for all $i$, which is equivalent to $\sigma_j\subseteq c_i$ for all $i$. This happens if and only if $\sigma_j\subseteq \sigma$, which by choice of $c$ is equivalent to $j\in f(c)$. This proves the result.
\end{proof}

\begin{corollary}\label{cor:Cimic}
Let $\C\subseteq 2^{[n]}$ and suppose that $\widehat{M(\C)}\setminus \C$ contains only the codeword $\{i\}$ for some $i$. Then $\C^{(j)}$ is max-intersection complete if and only if $j=i$ and $\emptyset\in \C$, or $j\neq i$ and $\{i,j\}\in\C$.
\end{corollary}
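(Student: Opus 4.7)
The plan is to apply Lemma \ref{lem:imagemic} directly to the morphism $f$ defining $\C^{(j)}$. By Definition \ref{def:coveredcode}, this morphism is determined by the trunks $T_k = \Tk_\C(\{k\})$ for $k \neq j$ together with the pair trunks $\Tk_\C(\{k,j\}) = T_k \cap T_j$ for $k\neq j$ satisfying $T_k\cap T_j\neq T_j$. Thus the family $\{\sigma_l\}$ to which Lemma \ref{lem:imagemic} is applied consists of the singletons $\{k\}$ for $k\neq j$ together with the pairs $\{k,j\}$ for the relevant $k$.

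Since by hypothesis $\widehat{M(\C)}\setminus \C$ consists of the single codeword $\{i\}$, Lemma \ref{lem:imagemic} reduces MIC-ness of $\C^{(j)}$ to the existence of a codeword $c\in\C$ satisfying $\sigma_l\subseteq c \iff \sigma_l\subseteq \{i\}$ for every $l$. I would first unpack these biconditions: the singleton conditions (for $k\neq j$) amount to $c\cap([n]\setminus\{j\}) = \{i\}$, while every pair condition amounts to $\{k,j\}\not\subseteq c$, since $\{k,j\}\subseteq\{i\}$ is impossible.

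I would then split into the two cases of the corollary. In the case $j=i$, the singleton condition forces $c\subseteq\{i\}$, and the pair conditions are automatic because $c$ contains no index $k\neq i$. Since $\{i\}\notin\C$ by hypothesis, the only viable choice is $c=\emptyset$, which gives the equivalence with $\emptyset\in\C$. In the case $j\neq i$, the singleton condition forces $c\in\{\{i\},\{i,j\}\}$, and since again $\{i\}\notin\C$, the existence of a suitable $c$ is equivalent to $\{i,j\}\in\C$.

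The one delicate step is verifying that, in the second case, $c = \{i,j\}$ actually satisfies the pair conditions: the only potentially troublesome pair is $\{i,j\}$ itself, which is included among the $\sigma_l$'s precisely when $T_i\cap T_j\neq T_j$. I expect this to be handled either by observing that the hypothesis $\widehat{M(\C)}\setminus\C = \{\{i\}\}$ together with $\{i,j\}\in\C$ forces $T_i\cap T_j = T_j$ in the situations of interest, or by incorporating that condition into the analysis; this bookkeeping is the main thing to get right, while the rest of the argument is a routine unwinding of Lemma \ref{lem:imagemic}.
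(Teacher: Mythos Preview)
The paper omits a proof of this corollary, so there is nothing to compare line by line; your approach via Lemma~\ref{lem:imagemic} is clearly the intended one, and your unpacking of the conditions is correct.

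You are also right that the ``delicate step'' is the crux, and in fact it cannot be dispatched in the generality stated: the corollary, as written, is not quite correct. Take $\C=\{\,\{1,2\},\{1,3\},\{2\},\{3\},\emptyset\,\}$ on $[3]$. This code is reduced, has maximal codewords $\{1,2\}$ and $\{1,3\}$, and $\widehat{M(\C)}\setminus\C=\{\{1\}\}$, so $i=1$. With $j=2$ we have $\{i,j\}=\{1,2\}\in\C$, yet $T_1\cap T_2=\{\{1,2\}\}\neq T_2$, so the pair $\{1,2\}$ is among the $\sigma_l$'s. Your own analysis then forces $c=\{1\}\notin\C$, so $\C^{(2)}$ is \emph{not} max-intersection complete; one can also check this directly by computing $\C^{(2)}\cong\{\{1,3\},\{1,2\},\{2\},\emptyset\}$, which is missing the intersection $\{1\}$. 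Thus the ``only if'' direction survives, but the ``if'' direction for $j\neq i$ needs the additional hypothesis $T_j\subseteq T_i$ (equivalently, every codeword containing $j$ also contains $i$), exactly so that the troublesome pair $\{i,j\}$ is excluded from the defining trunks of $\C^{(j)}$.

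This does not affect the paper's use of the corollary in Lemma~\ref{lem:Cn1}: there $i=n+1$ and $j\in[n]$, and in $\C_n$ every codeword containing some $j\in[n]$ also contains $n+1$, so $T_j\subseteq T_{n+1}$ holds automatically. So your instinct that the hypothesis ``forces $T_i\cap T_j=T_j$ in the situations of interest'' is exactly right for the application, even though it fails in general.
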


\section{A Family of Locally Perfect Minimally Non-Convex Codes}\label{sec:Cn}
In this section we describe a family of codes (see Definition \ref{def:Cn}) which are minimally non-convex (see Theorems \ref{thm:nonconvex} and \ref{thm:mnc}), and have obstructions that arise from Theorem \ref{thm:sunflower}. 
Importantly, these codes have no other obstructions to convexity, as we will discuss below (see Theorem \ref{thm:locallyperfect}). This family of codes provides the first infinite collection of minimally non-convex codes that do not have local obstructions or nerve obstructions. 

Before proceeding we recall a couple of tools regarding simplicial complexes. Given a code $\C\subseteq 2^{[n]}$, the smallest simplicial complex containing $\C$ is denoted $\Delta(\C)$. Given a collection of convex open sets $\U = \{U_1,\ldots, U_n\}$ in $\R^d$, the \emph{nerve} of $\U$ is the simplicial complex \[
\nerve(\U) \od \{\sigma\subseteq [n] \mid U_\sigma\neq\emptyset\}.
\]
(Here and in the remainder of the paper, $U_\sigma \od \bigcap_{i\in \sigma} U_i$). Equivalently, $\nerve(\U) = \Delta(\code(\U, \R^d))$. A useful result regarding nerves is \emph{Borsuk's nerve lemma \cite[Theorem 10.6]{bjorner95}}, which states that $\nerve(\U)$ is homotopy equivalent to $\bigcup_{i=1}^n U_i$ as topological spaces. The last notion we will make use of is that of a link in a simplicial complex. If $\Delta$ is a simplicial complex and $\sigma\in \Delta$, then the \emph{link} of $\sigma$ is the simplicial complex $\link_\Delta(\sigma) \od \{\tau\subseteq [n]\mid \tau\cap \sigma = \emptyset \text{ and } \tau \cup\sigma \in \Delta\}$. With these tools in hand, we are ready to define and investigate our family of codes. 

\begin{definition}\label{def:Cn}
Let $n \ge 2$. Define $\C_n\subseteq 2^{[2n+2]}$ to be the combinatorial code that consists of the following codewords:\begin{itemize}
\item[(i)] The empty set, 
\item[(ii)] all codewords of the form $\sigma\cup \{n+1\}$ for $\sigma$ a nonempty proper subset of $[n]$.
\item[(iii)] $\{n+1+i\}$ for $1\le i \le n+1$, 
\item[(iv)] $([n]\setminus \{i\})\cup \{n+1\}\cup \{n+1+i\} $ for $1\le i \le n$,
\item[(v)] the codeword $[n]\cup \{n+1\} \cup \{2n+2\}$, and
\item[(vi)] the codeword $\{n+2, n+3, \ldots, 2n+2\}$. 
\end{itemize}
\end{definition}

Note that the maximal codewords in $\C_n$ are those of types (iv), (v), and (vi). 

Some of the important features that would arise in any realization of $\C_n$ are the following:\begin{itemize}
\item The sets $U_{n+2},\cdots, U_{2n+2}$ form a sunflower with $n+1$ petals whose center is disjoint from $U_{n+1}$. This follows from codewords of types (iii) and (vi) in Definition \ref{def:Cn}.
\item $U_{n+1}$ is equal to $U_{1}\cup \cdots\cup U_n$. This follows from the fact that $n+1$ is present in all codewords that involve any $i\in[n]$, together with the fact that $\{n+1\}$ is not a codeword in $\C_n$.
\item The collection $\{U_1,\ldots, U_n\}$ has nerve equal to $2^{[n]}$. This follows from the codeword of type (v) in Definition \ref{def:Cn}, which has $[n]$ as a subset.
\item All the petals $U_{n+2}, \cdots, U_{2n+1}$ touch $U_{n+1}$ at a region corresponding to a unique face of codimension 1 in $2^{[n]}$. This is described by codewords of type (iv) in Definition \ref{def:Cn}). 
\item The petal $U_{2n+2}$ covers the region in $U_{n+1}$ corresponding to the facet of $2^{[n]}$ (this is not pictured in the figure below). This is a result of the codeword of type (v) in Definition \ref{def:Cn} together with the fact that this is the only codeword in $\C_n$ containing $[n]$ as a subset.
\end{itemize}

The code $\C_n$ is not convex, as argued in Theorem \ref{thm:nonconvex}, but one can visualize it as arising from the situation pictured below.\[
\includegraphics[scale=0.9]{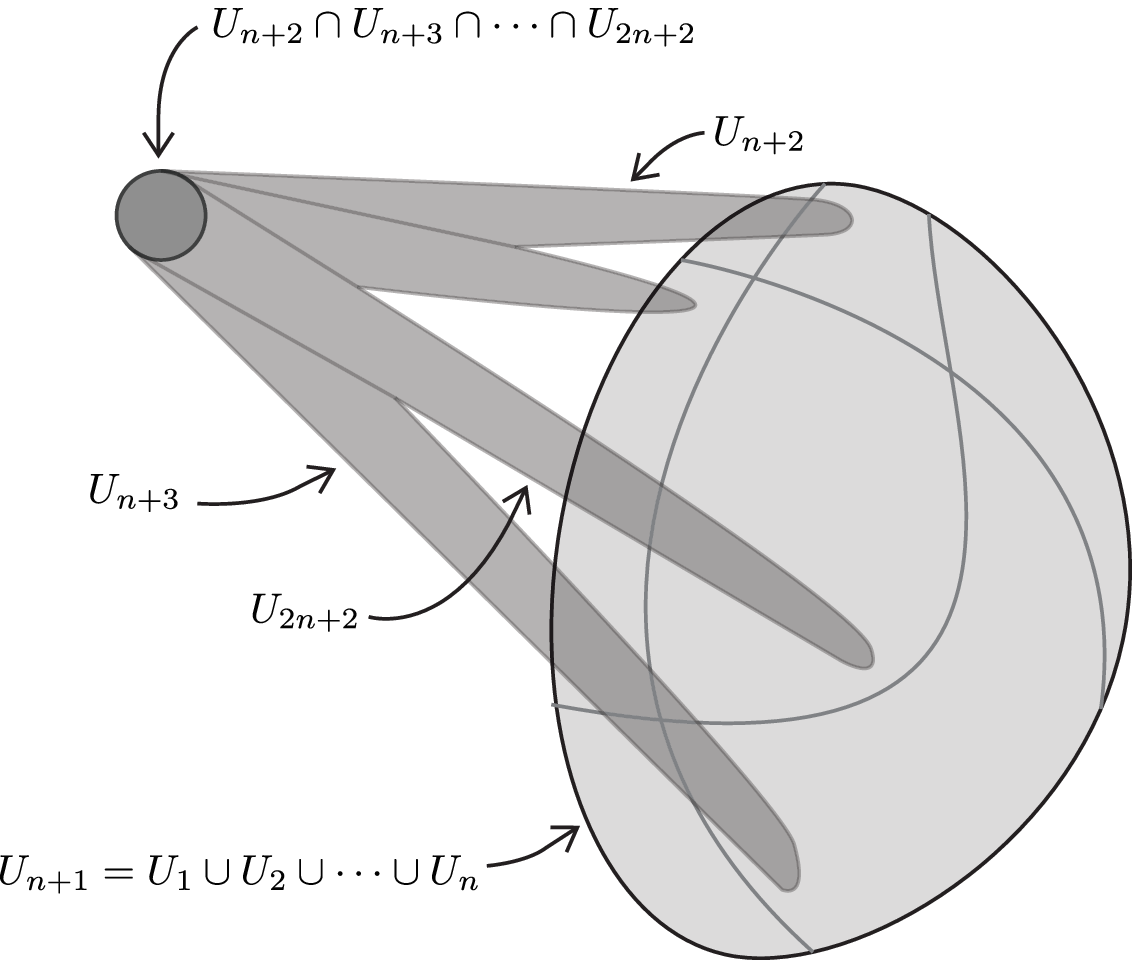}
\]

Essentially, the code $\C_n$ describes a collection of sets which has two pieces: a sunflower consisting of $n+1$ petals, and a collection of sets whose nerve is the $n$-simplex. The petals of the sunflower are incident to the unique facet of the $n$-simplex, as well as all codimension 1 faces,  in the sense that the petals intersect the regions which give rise to these faces in the nerve of $\{U_1,\ldots, U_n\}$. Below we argue that this geometric situation cannot arise from convex open sets.

\begin{theorem}\label{thm:nonconvex}
The code $\C_n$ is not convex.
\end{theorem}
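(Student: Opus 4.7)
The plan is to assume $\C_n$ has a convex realization $\{U_1, \ldots, U_{2n+2}\}$ in some $\R^d$ and derive a contradiction using only two of the $n+1$ sunflower petals together with the convexity of $U_{n+1}$. Setting $V_i := U_{n+1+i}$ for $1 \le i \le n+1$, I would first verify from the codewords that $\{V_1, \ldots, V_{n+1}\}$ is a sunflower: codeword (vi) is the only codeword of $\C_n$ containing two distinct indices from $\{n+2, \ldots, 2n+2\}$, so the pairwise intersections $V_i \cap V_j$ for $i \ne j$ all coincide with the nonempty center $W := \bigcap_i V_i$. No codeword contains $\{n+1\} \cup \{n+2, \ldots, 2n+2\}$, which gives $W \cap U_{n+1} = \emptyset$.

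Using codewords of types (iv) and (v), I would select points $p_1 \in V_1 \cap U_{n+1}$ and $p_2 \in V_2 \cap U_{n+1}$. Since $V_i$ is convex and contains both $p_i$ and all of $W$, it contains $\conv(\{p_i\} \cup W)$. Thus the sunflower equation $V_1 \cap V_2 = W$ forces the inclusion $\conv(\{p_1\} \cup W) \cap \conv(\{p_2\} \cup W) \subseteq W$.

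The main step — and the main obstacle — is to contradict this inclusion by producing a point $q \notin W$ that lies in both of these cones. My plan is a separation-of-cones argument: strictly separate the disjoint convex open sets $U_{n+1}$ and $W$ by a hyperplane $H^*$, take a boundary point $w_0 \in \overline{W}$ closest to $H^*$, and set $q := w_0 + \epsilon n$ where $n$ is the unit normal at $w_0$ pointing toward $H^*$ and $\epsilon > 0$ is small. Because $p_1, p_2$ both lie in the half-space of $H^*$ opposite to $W$, a short geometric check shows that for each $i$ the line through $p_i$ and $q$, extended past $q$, crosses $\partial W$ near $w_0$ and enters $W$; thus $q$ lies on a segment from $p_i$ to a point of $W$, and $q \in \conv(\{p_i\} \cup W)$. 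For $\epsilon$ small enough, $q \notin W$, producing the desired contradiction. Verifying that the extended ray indeed enters $W$ — which reduces to checking that $q - p_i$ has a large enough component in the $-n$ direction relative to the local shape of $\partial W$ near $w_0$ — is the main technical point, but it is uniform in $d$ because convexity of $U_{n+1}$ forces $p_1$ and $p_2$ to sit on the same side of $H^*$.
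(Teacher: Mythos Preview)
Your approach has a fatal gap: the geometric configuration you try to contradict is \emph{not} contradictory. Using only two petals $V_1,V_2$ of the sunflower together with the convex set $U_{n+1}$, you are effectively trying to show that one cannot have convex open sets $V_1,V_2$ with $V_1\cap V_2=W$ and a convex open $U$ meeting both $V_1$ and $V_2$ but disjoint from $W$. This is simply false. In $\R^2$ take $W=(0,1)^2$, $V_1=(0,1)\times(0,10)$, $V_2=(0,10)\times(0,1)$, so $V_1\cap V_2=W$; then let $U$ be a thin convex open neighborhood of the segment from $p_1=(\tfrac12,5)\in V_1$ to $p_2=(5,\tfrac12)\in V_2$. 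One checks this segment misses $W$, so $U$ can be chosen disjoint from $W$. Running your construction here, the separating hyperplane can be taken as $x+y=2$, the nearest boundary point is $w_0=(1,1)$, and $q=(1+\epsilon',1+\epsilon')$. But $\conv(\{p_1\}\cup W)\subseteq V_1\subseteq\{x<1\}$, so $q\notin\conv(\{p_1\}\cup W)$: the ray from $p_1$ through $q$ has increasing $x$-coordinate past $q$ and never re-enters $W$. Your ``short geometric check'' fails precisely here, and no refinement can rescue it, because the conclusion you are aiming for is false.

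The paper's proof avoids this by using \emph{all} $n+1$ petals and, crucially, a dimension count. It picks points $p_i$ in the atoms of the type-(iv) codewords for $1\le i\le n$, uses the nerve lemma to force a point $p_{n+1}\in\conv\{p_1,\dots,p_n\}$ lying in $U_{[n]}\subseteq U_{2n+2}$, and then observes that the affine span of $\{p_1,\dots,p_n\}$ has dimension at most $n-1$ yet meets all $n+1$ petals $U_{n+2},\dots,U_{2n+2}$. It is exactly this mismatch---$n+1$ petals versus an $(n-1)$-dimensional flat---that lets the sunflower theorem (Corollary~\ref{cor:plane} and Corollary~\ref{cor:conv}) produce a point of the center inside $U_{n+1}$. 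Your two-petal argument discards the dimension leverage entirely, which is why it cannot work.
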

\begin{proof}
Suppose for contradiction that $\C_n$ is convex, with a realization $\{U_1,\ldots, U_{2n+2}\}$ in $\R^d$. For $1\le i \le n$, let $p_i$ be a point in the atom of $([n]\setminus \{i\})\cup \{n+1\}\cup \{n+1+i\} $. Let $C = \conv \{p_i\mid 1\le i \le n\}$. We claim that $C$ contains a point in the atom of $[n]\cup \{n+1\} \cup \{2n+2\}$. To see this, first note that $C\subseteq U_{n+1}$. From the definition of $\C_n$, we see that $\{U_1,\ldots, U_n\}$ cover $U_{n+1}$, and so $\{U_1\cap C, \ldots, U_n\cap C\}$ covers $C$. Now consider the nerve $N$ of $\{U_1\cap C, \ldots, U_n\cap C\}$. By choice of $p_i$, this nerve contains $[n]\setminus \{i\}$ for $1\le i\le n$. This implies that $N$ contains all codimension-one faces of $2^{[n]}$. By the nerve lemma (see \cite[Theorem 10.6]{bjorner95}) $N$ is contractible, which implies that $N$ contains $[n]$, i.e., it is the simplex on $[n]$. Thus there exists a point $p_{n+1}\in C\cap U_1\cap \cdots \cap U_n$. Since the only codeword in $\C_n$ whose support contains $[n]$ is $[n]\cup \{n+1\} \cup \{2n+2\}$, we conclude that $p_{n+1}$ lies in the atom of this codeword.

Now, let $H$ be the affine hull of $\{p_1,\ldots, p_n\}$. Then $H$ contains $p_{n+1}$, and has dimension no larger than $n-1$. Note that $\{U_{n+2},\ldots, U_{2n+2}\}$ is a sunflower of $n+1$ sets, and let $U$ denote its center. If $H$ has positive codimension then  Corollary \ref{cor:plane} implies  that $H$ intersects $U$. If $H$ does not have positive codimension then it is equal to $\R^d$, in which case $H\cap U = U \neq \emptyset$. In either case, the collection $\{U_{n+2}\cap H, \ldots U_{2n+2}\cap H\}$ is a sunflower in $H$. Since the dimension of $H$ is no more than $n-1$, Corollary \ref{cor:conv} implies that $\conv \{p_1,\ldots, p_n\}$ intersects $U$, say at a point $p$. But $\conv \{p_1,\ldots, p_n\}\subseteq U_{n+1}$, and $U = U_{n+2}\cap\cdots \cap U_{2n+2}$. Thus $p$ lies in $U_i$ for $n+1\le i \le 2n+2$. By definition of $\C_n$ this is a contradiction. 
\end{proof}

In \cite{CUR} we introduced the notion of a \emph{nerve obstruction} in a code, generalizing the respective notions from \cite{local15} and \cite{undecidability} of local obstructions and local obstructions of the second kind. Given a code $\C$ and $\sigma\in \widehat{M(C)}\setminus \C$, we say that $\C$ has a nerve obstruction at $\sigma$ if $\link_{\Delta(\C)}(\sigma)$ is \emph{not} the nerve of a collection of convex open sets whose union is convex (that is, if $\link_{\Delta(\C)}(\sigma)$ is not a \emph{convex union representable} complex, as defined in \cite{CUR}). A code is called \emph{locally perfect} if it has no nerve obstructions. For a more comprehensive review of nerve obstructions see \cite{CUR}. 

All convex codes are locally perfect, so nerve obstructions serve as a method of detecting non-convexity. However, we cannot detect the non-convexity of $\C_n$ in this way, as the following theorem argues. 

\begin{theorem}\label{thm:locallyperfect}
The code $\C_n$ is locally perfect. 
\end{theorem}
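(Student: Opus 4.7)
The plan is to identify $\widehat{M(\C_n)} \setminus \C_n$ explicitly and then, for each $\sigma$ in this set, exhibit a convex union representation of $\link_{\Delta(\C_n)}(\sigma)$.

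First I would compute $\widehat{M(\C_n)} \setminus \C_n$ by enumerating intersections of the maximal codewords of $\C_n$, which are those of types (iv), (v), and (vi) from Definition \ref{def:Cn}. Intersecting any $k \ge 2$ type-(iv) codewords indexed by $S \subseteq [n]$ yields $([n] \setminus S) \cup \{n+1\}$, and adjoining the type-(v) codeword leaves the result unchanged, since type (v) contains $[n] \cup \{n+1\}$. These are type-(ii) codewords (hence in $\C_n$) whenever $|S| < n$; the single exception occurs at $S = [n]$, giving $\{n+1\}$. The remaining intersections (a single type-(iv) codeword with type (vi) gives $\{n+1+i\}$; type (v) with type (vi) gives $\{2n+2\}$; any intersection involving type (vi) together with two or more others is empty) are all either in $\C_n$ or empty. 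Hence $\widehat{M(\C_n)} \setminus \C_n = \{\{n+1\}\}$, so it suffices to realize $L := \link_{\Delta(\C_n)}(\{n+1\})$ as the nerve of a collection of convex open sets with convex union. Since the facets of $\Delta(\C_n)$ are the maximal codewords of $\C_n$, the facets of $L$ are $F_0 := [n] \cup \{2n+2\}$ and $F_i := ([n]\setminus\{i\}) \cup \{n+1+i\}$ for $i \in [n]$.

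To realize $L$, I would work in the open $n$-simplex $\Delta^\circ \subseteq \R^n$ with vertices $p_1, \ldots, p_{n+1}$ and barycentric coordinates $t_1, \ldots, t_{n+1}$. Fix a small $\epsilon > 0$. For $j \in [n]$, let $V_j := \{x \in \Delta^\circ : t_j(x) < 1/n + \epsilon\}$; each is convex and open. Let $V_{2n+2}$ be a small open ball around the barycenter (where $t_j = 1/(n+1) < 1/n + \epsilon$ for all $j$), and for $i \in [n]$ let $V_{n+1+i}$ be a small open ball near $p_i$ (where $t_i$ is close to $1$ and the remaining $t_j$ are close to $0$), with the sets in $\{V_{n+1+i}\}_{i=1}^n \cup \{V_{2n+2}\}$ chosen pairwise disjoint.

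Three verifications complete the argument: (i) $\bigcup_{j \in [n]} V_j = \Delta^\circ$, since $t_j \ge 1/n + \epsilon$ for all $j \in [n]$ would force $\sum_{j=1}^n t_j \ge 1 + n\epsilon > 1$, contradicting $\sum_{j=1}^{n+1} t_j = 1$; hence the total union is the convex set $\Delta^\circ$. (ii) The barycenter lies in $V_{2n+2} \cap \bigcap_{j \in [n]} V_j$, realizing $F_0$. (iii) For sufficiently small balls, $V_{n+1+i} \subseteq \bigcap_{j \in [n] \setminus \{i\}} V_j$ and is disjoint from $V_i$, from $V_{2n+2}$, and from every other $V_{n+1+i'}$, realizing $F_i$ and excluding any potential face that contains $n+1+i$ together with $i$, $2n+2$, or $n+1+i'$. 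Thus the nerve of the collection equals $L$, which is therefore convex union representable. The main obstacle is the simultaneous disjointness argument in (iii), which requires the auxiliary balls to be chosen small enough together; this is routine by compactness since the centers lie at positive pairwise distances in $\Delta^\circ$.
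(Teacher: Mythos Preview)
Your proof is correct. The identification of $\widehat{M(\C_n)}\setminus\C_n=\{\{n+1\}\}$ matches the paper's argument essentially verbatim. The difference lies in the second step: the paper recognizes the link $L=\link_{\Delta(\C_n)}(\{n+1\})$ as an $n$-simplex on $[n]\cup\{2n+2\}$ with $n$ further simplices on $([n]\setminus\{i\})\cup\{n+1+i\}$ glued along codimension-one faces, and then invokes \cite[Proposition~8.6]{CUR} to conclude such a complex is convex union representable. You instead build an explicit realization inside an open $n$-simplex, using sublevel sets of barycentric coordinates for the $V_j$ with $j\in[n]$ and small balls for the remaining sets. Your route is more self-contained (no external CUR machinery needed), while the paper's is shorter and situates the result within a known class of convex union representable complexes. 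Both are valid; your construction could in fact serve as a concrete witness for the special case of that proposition relevant here.
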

\begin{proof}
We must check for nerve obstructions at every $\sigma\in \widehat{M(\C_n)}\setminus\C_n$. To do so, we start by determining this set explicitly. The maximal codewords of $\C_n$ are exactly those of types (iv)-(vi) in Definition \ref{def:Cn}, i.e.,\begin{itemize}
\item[(1)] $([n]\setminus \{i\})\cup \{n+1\}\cup \{n+1+i\} $ for $1\le i \le n$,
\item[(2)] $[n]\cup \{n+1\} \cup \{2n+2\},$ and 
\item[(3)] $\{n+2,n+3,\ldots, 2n+2\}$. 
\end{itemize}
We need only examine intersections of these codewords that consist of more than one term. 
Any intersection involving (3) above will be either empty, or equal to $\{n+1+i\}$ for some $1\le i \le n+1$. Definition \ref{def:Cn} states that both possibilities lie in $\C_n$. Intersections of codewords of type (1) and (2) are of the form $\{n+1\}\cup [n]\setminus \sigma$. For $\sigma \neq [n]$ this is a codeword of type (ii) from Definition \ref{def:Cn}, hence it is an element of $\C_n$. For $\sigma = [n]$ this is $\{n+1\}$, which is not an element of $\C_n$. Thus $\{n+1\}$ is the only element of $\widehat{M(\C_n)}\setminus\C_n$. 

The fact that $\C_n$ is locally perfect follows immediately from \cite[Proposition 8.6]{CUR}. Indeed, one can check that the link of $\{n+1\}$ in $\Delta(\C_n)$ is just an $n$-simplex  on the vertex set $[n]\cup \{2n+2\}$, to which we attach $n$ simplices on vertex sets $[n]\setminus\{i\}\cup \{n+1+i\}$ for $i=1,2,\ldots, n$. The proposition cited above implies that such a complex is convex union representable. \end{proof}

We now argue that $\C_n$ is minimally non-convex for all $n$.  We have seen that $\C_n$ is not convex, so we just need to show that every code it covers in $\ParCode$ is convex. Theorem \ref{thm:cover} implies that we just need to check all the simple trunks of $\C_n$ and all $\C_n^{(i)}$ for $1\le i \le 2n+2$. We break these checks up into a number of cases, each treated in one of the lemmas below.

\begin{lemma}\label{lem:Cnsimpletrunks}
All the simple trunks in $\C_n$ are convex.
\end{lemma}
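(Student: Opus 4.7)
The plan is to handle each simple trunk $\Tk_{\C_n}(k)$ individually and to show it is convex, either by checking that it is max-intersection complete (so that convexity follows from \cite{openclosed}) or by exhibiting a realization by hand. I expect only one index to require an explicit construction.

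First I would dispose of $k \in \{n+2, \ldots, 2n+2\}$. In this range the trunk has at most three codewords: the singleton $\{k\}$, a single codeword of type (iv) or (v) containing $k$, and the type (vi) codeword $\{n+2, \ldots, 2n+2\}$. The two maximal codewords intersect in $\{k\}$, so the trunk is max-intersection complete and hence convex.

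Next I would treat $k \in [n]$. The maximal codewords of $\Tk_{\C_n}(k)$ are exactly the type (iv) codewords with $i \neq k$ together with the type (v) codeword $[n]\cup\{n+1, 2n+2\}$. Intersecting any subfamily yields a codeword of the form $([n]\setminus S)\cup\{n+1\}$ with $\emptyset \neq S \subseteq [n]\setminus\{k\}$, which is a type (ii) codeword of $\C_n$ and hence lies in the trunk. Max-intersection completeness again gives convexity.

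The main obstacle is $k = n+1$. Here the trunk consists of all codewords of types (ii), (iv), and (v) and is \emph{not} max-intersection complete: for instance the intersection of all type (iv) codewords is $\{n+1\}$, which is not a codeword. I would instead realize the trunk directly, taking $X$ to be an open ball in $\R^n$, setting $U_{n+1} = X$, and choosing $U_1, \ldots, U_n$ to be convex open subsets of $X$ (for example half-space slices $X \cap \{x : x_i > -1/\sqrt{n}\}$) whose union is all of $X$ and whose nerve is the full simplex $2^{[n]}$. Then $U_{2n+2} := U_1 \cap \cdots \cap U_n$ is convex open and contained in every $U_i$, and for each $i \in [n]$ one selects a small convex open ball $U_{n+1+i}$ inside the atom of $[n]\setminus\{i\}$, disjoint from $U_{2n+2}$ and from the other $U_{n+1+j}$, and small enough that a point of the atom remains uncovered. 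The crucial feature that makes this possible---in contrast to the non-convexity of $\C_n$ itself---is that the trunk omits the singleton codewords $\{n+1+i\}$, freeing the petals to lie inside $U_{n+1}$ rather than forming a sunflower. Verifying that the code of this realization is exactly $\Tk_{\C_n}(n+1)$ is a routine atom-by-atom check.
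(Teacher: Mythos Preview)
Your proposal is correct and follows the same overall strategy as the paper: for $k\in[n]$ and $k\in\{n+2,\ldots,2n+2\}$ you verify max-intersection completeness exactly as the paper does (the paper phrases the latter case as ``isomorphic to $\{1,2,\emptyset\}$,'' which amounts to the same thing), and for $k=n+1$ both you and the paper build a realization by hand, first realizing the sets $U_1,\ldots,U_{n+1},U_{2n+2}$ and then dropping small balls $U_{n+1+i}$ into the appropriate atoms.

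The only substantive difference is in how the ``core'' realization for $k=n+1$ is produced. The paper observes that the subcode of types (ii) and (v) has a unique maximal codeword and invokes a construction from \cite{openclosed} to realize it in $\R^2$ with top-dimensional atoms; you instead give an explicit half-space arrangement in $\R^n$. Your construction is correct (the covering condition $\bigcup_i U_i = X$ holds by the norm estimate, and every nonempty proper $\sigma\subseteq[n]$ has a nonempty atom), and it has the virtue of being self-contained, at the cost of using dimension $n$ rather than $2$. Either route works.
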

\begin{proof}For $1\le i \le n$, the maximal elements of the simple trunk $\Tk_{\C_n}(i)$ come in two forms: $\{n+1+j\} \cup \{n+1\}\cup [n]\setminus \{j\}$ for $j\neq i$, and $\{2n+2\}\cup \{n+1\} \cup [n]$. One can verify that intersections of these maximal codewords are of the form $\{n+1\} \cup \sigma$ where $\sigma$ is a proper subset of $[n]$ containing $i$. These intersections all lie in $\Tk_{\C_n}(i)$, so the trunk is max-intersection complete and hence convex. 

For $i = n+1$, the trunk consists of codewords of type (ii), (iv), and (v) in Definition \ref{def:Cn}. We can first realize the code consisting of types (ii) and (v) by noting it has a unique maximal codeword, namely $[n]\cup \{n+1\}\cup\{2n+2\}$. This allows us to construct a realization in $\R^2$ in which all atoms are top-dimensional via a construction of \cite{openclosed}. To add codewords of type (iv) we simply define $U_{n+1+i}$ for $1\le i \le n$ to be a convex open subset of the atom for $([n]\setminus \{i\})\cup \{n+1\}$ in this realization. 

For $n+2\le i \le 2n+2$ the trunk $\Tk_{\C_n}(i)$ consists of three codewords, and up to isomorphism it is just the convex code $\{1,2,\emptyset\}$. Thus all simple trunks are convex.
\end{proof}

\begin{lemma}\label{lem:Cn1}
The $i$-th covered code of $\C_n$  is convex for $1\le i \le n+1$
\end{lemma}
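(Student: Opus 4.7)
The plan is to reduce the statement to a direct application of Corollary \ref{cor:Cimic} combined with the result of \cite{openclosed} that every max-intersection complete code is convex. Recall from the proof of Theorem \ref{thm:locallyperfect} that $\widehat{M(\C_n)}\setminus \C_n$ consists of the single codeword $\{n+1\}$. This is exactly the hypothesis of Corollary \ref{cor:Cimic}, with the distinguished index in the corollary played by $n+1$.

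With this identification, Corollary \ref{cor:Cimic} tells us that $\C_n^{(j)}$ is max-intersection complete precisely when either $j = n+1$ and $\emptyset \in \C_n$, or $j \neq n+1$ and $\{j,n+1\} \in \C_n$. I would check these conditions individually for $j$ in the range $1 \le j \le n+1$. The case $j = n+1$ is immediate, since $\emptyset$ is a codeword of type (i) in Definition \ref{def:Cn}. For $1 \le j \le n$, the singleton $\{j\}$ is a nonempty proper subset of $[n]$ (using the standing assumption $n \ge 2$), so $\{j, n+1\}$ is a codeword of type (ii) in Definition \ref{def:Cn}, and hence lies in $\C_n$.

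Thus each $\C_n^{(i)}$ for $1 \le i \le n+1$ is max-intersection complete, and the theorem from \cite{openclosed} delivers convexity. The argument is essentially a finite case check against the explicit list in Definition \ref{def:Cn}, so there is no substantive obstacle; the real work was done in establishing Corollary \ref{cor:Cimic} and in identifying $\widehat{M(\C_n)}\setminus \C_n$ during the proof of Theorem \ref{thm:locallyperfect}.
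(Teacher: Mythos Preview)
Your proof is correct and follows essentially the same approach as the paper: both invoke Corollary \ref{cor:Cimic} after recalling from the proof of Theorem \ref{thm:locallyperfect} that $\widehat{M(\C_n)}\setminus\C_n = \{\{n+1\}\}$, then verify the two cases $j=n+1$ (using $\emptyset\in\C_n$) and $1\le j\le n$ (using $\{j,n+1\}\in\C_n$). Your explicit mention of the hypothesis $n\ge 2$ to ensure $\{j\}$ is a \emph{proper} subset of $[n]$ is a nice touch that the paper leaves implicit.
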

\begin{proof}We claim that $\C_n^{(i)}$ is max-intersection complete in these cases. Recall from the proof of Theorem \ref{thm:locallyperfect} that $\{n+1\}$ is the only element of $\widehat{M(\C_n)}\setminus \C$, so we can apply Corollary \ref{cor:Cimic}. 
For $1\le i \le n$, the codeword $\{i,n+1\}$ is present in $\C_n$, and so by the corollary we conclude that $\C_n^{(i)}$ is max-intersection complete in this case. For $i = n+1$, we apply the corollary again, noting that $\emptyset\in \C_n$. Thus $\C_n^{(i)}$ is max-intersection complete and hence convex in these cases. 
\end{proof}

\begin{lemma}\label{lem:Cn2}
The $i$-th covered code of $\C_n$  is convex for $n+2\le i \le 2n+1$
\end{lemma}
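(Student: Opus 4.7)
My plan is to construct an explicit convex realization of $\C_n^{(i)}$ by augmenting a realization of a simpler, provably convex intermediate code. Write $k = i - n - 1 \in [n]$, and let $\D$ be the code on index set $[2n+2]\setminus\{i\}$ obtained by deleting the index $i$ from every codeword of $\C_n$. Explicitly, $\D$ consists of $\emptyset$, the type (ii) codewords of $\C_n$, the type (iv) codewords $([n]\setminus\{\ell\}) \cup \{n+1\} \cup \{n+1+\ell\}$ for $\ell \neq k$, the type (v) codeword $[n] \cup \{n+1\} \cup \{2n+2\}$, the singletons $\{n+1+j\}$ for $j \in [n+1]\setminus\{k\}$, and the new codeword $\{n+2,\ldots,2n+2\}\setminus\{i\}$ replacing type (vi). The first step is to verify $\D$ is max-intersection complete: its maximal codewords are the $n-1$ type (iv) codewords above, the type (v) codeword, and $\{n+2,\ldots,2n+2\}\setminus\{i\}$; a direct enumeration shows every pairwise and higher intersection among these is of the form $\sigma\cup\{n+1\}$ for some $\sigma\subsetneq[n]$, a singleton $\{n+1+\ell\}$ or $\{2n+2\}$, or the empty set, each of which lies in $\D$. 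Hence $\D$ is convex by \cite{openclosed}.

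The next step is to fix a convex realization $\{V_j : j \in [2n+2]\setminus\{i\}\}$ of $\D$ and augment it by two additional convex open sets $U_{i'}$ and $U_{i''}$. Set $U_{i''} = \bigcap_{j \in \{n+2,\ldots,2n+2\}\setminus\{i\}} V_j$; this is convex and open, and because no codeword of $\D$ strictly contains $\{n+2,\ldots,2n+2\}\setminus\{i\}$ (as inherited from $\C_n$, where only the type (vi) codeword does so), $U_{i''}$ is disjoint from every $V_\ell$ with $\ell \in [n+1]$ and coincides with the atom of $\{n+2,\ldots,2n+2\}\setminus\{i\}$. For $U_{i'}$, I take a small open ball strictly contained in the atom of $([n]\setminus\{k\}) \cup \{n+1\}$ in the realization of $\D$; this atom lies inside $V_{n+1}$ (so $U_{i'}$ is disjoint from $U_{i''}$) and, by definition of the atom, is disjoint from every $V_j$ with $j \in \{k\}\cup(\{n+2,\ldots,2n+2\}\setminus\{i\})$.

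Adding $U_{i'}$ and $U_{i''}$ to the collection then yields the desired convex realization of $\C_n^{(i)}$: $U_{i''}$ converts the atom of $\{n+2,\ldots,2n+2\}\setminus\{i\}$ entirely into the atom of $(\{n+2,\ldots,2n+2\}\setminus\{i\}) \cup \{i''\}$, while $U_{i'}$ introduces a new atom of $([n]\setminus\{k\}) \cup \{n+1\} \cup \{i'\}$ without destroying the atom of $([n]\setminus\{k\})\cup\{n+1\}$ (ensured by strict containment). By the disjointness claims above, neither $U_{i'}$ nor $U_{i''}$ interacts with any other set in the collection, so no spurious atoms are created and the resulting realization has code exactly $\C_n^{(i)}$. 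The main potential obstacle is verifying these two disjointness claims, but each follows from a routine inspection of which codewords of $\C_n$ contain the relevant subsets.
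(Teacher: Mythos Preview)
Your approach is essentially the same as the paper's: both reduce to showing that the code $\D$ obtained by deleting the index $i$ is max-intersection complete (hence convex by \cite{openclosed}), and then restore the codeword $c = ([n]\setminus\{k\})\cup\{n+1\}\cup\{i\}$ by inserting one small convex open set into a suitable region of a realization of $\D$. Your $\D$ coincides with the paper's intermediate code $\C_n^{(i)}\setminus\{c\}$, and the max-intersection-completeness argument is the same. The paper differs only in presentation: it first identifies $\C_n^{(i)}$ up to isomorphism (so that only the single new index $i$ is needed, not your pair $i',i''$), and it describes the target region for the small ball as $\bigl(\bigcap_{j\in c\setminus\{i\}} V_j\bigr)\setminus V_k$ rather than as the atom of $([n]\setminus\{k\})\cup\{n+1\}$; these two sets happen to coincide for this particular $\D$. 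Your extra set $U_{i''}$ is harmless but redundant: the trunk $T_j\cap T_i = \{\,\{n+2,\ldots,2n+2\}\,\}$ is already generated by $\{T_\ell:\ell\neq i\}$, so by Corollary~\ref{cor:redundant} including $i''$ does not change $\C_n^{(i)}$ up to isomorphism.

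There is one small gap you should close. You place $U_{i'}$ as an open ball ``strictly contained in the atom of $([n]\setminus\{k\})\cup\{n+1\}$,'' but an atom in an arbitrary open convex realization need not have nonempty interior (it is a convex open set minus a \emph{union} of open sets). You need either to choose the realization of $\D$ so that every atom is top-dimensional (the construction in \cite{openclosed} does this), or to observe---as the paper does---that for this particular $\D$ the atom in question equals $\bigl(\bigcap_{j\in ([n]\setminus\{k\})\cup\{n+1\}} V_j\bigr)\setminus V_k$, a nonempty difference of two convex open sets, which always has nonempty interior. With that addition your argument is complete.
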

\begin{proof}Note that there are only three codewords involving $i$ in $\C_n$: $([n]\setminus \{i-n-1\})\cup \{n+1\} \cup \{i\}$, $\{i\}$, and $\{n+2,n+3,\ldots, 2n+2\}$. The trunks $T_i\cap T_j$ come in three types: those that are equal to $T_i$, which we throw out, those which contain the single codeword $\{n+2,n+3,\ldots, 2n+2\}$, which are already redundant to $T_j \cap T_{2n+2}$, and finally the trunk containing the single codeword $([n]\setminus \{i-n-1\})\cup \{n+1\} \cup \{i\}$. This third possibility is not redundant, and so we see that up to isomorphism, the code $\C_n^{(i)}$ will be the result of removing the index $i$ from the codewords $\{n+2,n+3,\ldots, 2n+2\}$ and $\{i\}$ while leaving the codeword $c = ([n]\setminus \{i-n-1\})\cup \{n+1\} \cup \{i\}$ as well as the rest of the codewords in $\C_n$ unchanged. 

We claim that if we forget the codeword $c$, then $\C_n^{(i)}$ is max-intersection complete. Recall from the proof of Theorem \ref{thm:locallyperfect} that $\{n+1\}$ is the only missing max-intersection in $\C_n$. If  we remove $c$ from $\C_n^{(i)}$, then the set $\{n+1\}$ is no longer an intersection of maximal codewords since the maximal codewords containing it are simply $[n+1]\cup \{2n+2\}$ or those of the form $([n]\setminus \{j\})\cup \{n+1\} \cup\{n+1+j\}$ for $n+1+j\neq i$. The intersection of these maximal codewords is $\{i-n-1, n+1\}$ which is a codeword in $\C_n^{(i)}$. Thus there exists a convex realization of the code $\C_n^{(i)}\setminus \{c\}$.

To account for the codeword $c$ we simply let $U_i$ be a small open ball contained in the region $\big(\bigcap_{j\in c\setminus\{i\}} U_j\big) \setminus U_{i-n-1}$. This region has nonempty interior because it is the nonempty set difference of two convex open sets. Since $c$ is the only codeword of $\C_n^{(i)}$ which contains $i$ this yields a realization of $\C_n^{(i)}$.  Thus $\C_n^{(i)}$ is convex as desired. 
\end{proof}

This leaves only the case that $i=2n+2$. This case is the most complicated, and involves directly constructing a realization of $\C_n^{(i)}$ by defining a number of convex sets using inequalities. A tool which makes this process slightly easier is the following theorem from \cite{openclosed}.

\begin{theorem}[Monotonicity of Convexity, \cite{openclosed}]
Let $\C$ be a convex code. Any code $\D$ with $\C\subseteq \D \subseteq \Delta(\C)$ is also convex. 
\end{theorem}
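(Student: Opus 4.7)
The plan is to induct on $|\D \setminus \C|$. The base case $\D = \C$ is immediate. For the inductive step, it suffices to establish the following claim: if $\C'$ is convex and $\sigma \in \Delta(\C) \setminus \C'$, then $\C' \cup \{\sigma\}$ is convex. Iterating this along any chain $\C = \C_0 \subsetneq \C_1 \subsetneq \cdots \subsetneq \C_k = \D$ then produces a convex realization of $\D$.

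For the construction, I would start with a convex realization $\{U_1, \ldots, U_n\}$ of $\C'$ in an ambient space $X \subseteq \R^d$. Since $\sigma \in \Delta(\C) \subseteq \Delta(\C')$, some codeword $c \in \C'$ contains $\sigma$. Choose a small open ball $B \subseteq X$ lying entirely inside the (nonempty) atom of $c$, so that $B \subseteq U_i$ for every $i \in c$ and $B \cap U_j = \emptyset$ for every $j \notin c$. Now lift everything by one dimension: writing $\R^{d+1} = \R^d \times \R$ with height coordinate $t$, set $V_j = U_j \times (-1, 1)$ for $j \notin \sigma$, and $V_i = \conv\bigl((U_i \times (-1, 1)) \cup (B \times (1, 2))\bigr)$ for $i \in \sigma$. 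Each $V_i$ is convex and open, and the new ambient may be taken as the convex hull $\conv\bigl(\bigcup_i V_i\bigr)$, which is convex open as well.

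The verification splits across two regions. At any slab height $t \in (-1, 1)$, the slice of each $V_i$ equals $U_i$: for $i \in \sigma$ the convex-hull extensions have the form $\lambda U_i + (1-\lambda) B$, which lies inside $U_i$ by convexity since $B \subseteq U_i$. Thus every atom of $\C'$ is preserved at slab heights. At heights $t \in (1, 2)$ only the $V_i$ for $i \in \sigma$ can be present, and inside the tube $B \times (1, 2)$ each such $V_i$ contains $B$ while no $V_j$ with $j \notin \sigma$ extends there; the codeword $\sigma$ is therefore realized on the open set $B \times (1, 2)$.

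The main obstacle is ruling out unintended atoms in the transition region at heights $t \in (1, 2)$ outside the tube. There, the convex-hull extensions of different $V_i$ for $i \in \sigma$ interpolate between the (possibly distinct) sets $U_i$ and $B$, so their slice shapes can differ, potentially creating atoms indexed by proper subsets $\tau \subsetneq \sigma$ that need not lie in $\D$. I would address this by shrinking $B$ and choosing its center deep inside the atom of $c$, forcing these extensions to collapse uniformly to the tube, and by restricting the ambient to a convex open subset that avoids the transition region outside a small neighborhood of $B \times (1, 2)$. Because every codeword that could be introduced lies in $\Delta(\C)$, a careful choice of the order in which the elements of $\D \setminus \C$ are added lets us pin the final realization down to exactly $\D$.
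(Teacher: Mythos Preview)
This theorem is not proved in the present paper; it is quoted from \cite{openclosed} and used as a black box, so there is no in-paper argument to compare against. Your attempt must therefore stand on its own.

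Your reduction to adding a single $\sigma$ is the standard opening, and you correctly flag the transition region as the danger zone. But the patches you propose do not close the gap. Take $U_1 = B_1(0) \subsetneq U_2 = B_2(0) \subsetneq U_3 = B_3(0)$ in $\R^2$, so that $\C = \{\emptyset, 3, 23, 123\}$, and try to add $\sigma = \{1,2\}$ with $c = 123$ and $B = B_\epsilon(0)$. At every height $t\in(1,2)$ your slices $(V_1)_t \subsetneq (V_2)_t$ are concentric disks whose radii differ by a positive amount independent of $\epsilon$; every point of the annulus $(V_2)_t\setminus(V_1)_t$ lies in your ambient $\conv\bigl(\bigcup_i V_i\bigr)$ and carries the codeword $\{2\}\notin \C\cup\{\sigma\}$. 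Shrinking $B$ rescales all slices together and leaves the annulus intact. Restricting the ambient cannot help either: any convex open $X'$ containing $X\times(-1,1)$ together with a point at height $>1$ has, by convexity, a slice at height $t\in(1,2)$ of radius at least a fixed fraction of the radius of $X$, which here exceeds the radius of $(V_1)_t$, so the annulus survives inside the ambient. Finally, reordering is no escape when $\D = \C\cup\{12\}$: the stray codeword $\{2\}$ is simply not in $\D$.

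The construction that actually works (in the spirit of \cite{openclosed}) avoids convex hulls entirely. One shapes the ambient as a cone $X' = \{(x,t): x\in X,\ 0 < t < C - |x-p|/r\}$ narrowing toward $(p,C)$, keeps $V_i = (U_i\times(0,C))\cap X'$ for $i\notin c\setminus\sigma$, and simply truncates $V_j = (U_j\times(0,t_0))\cap X'$ for $j\in c\setminus\sigma$, with $t_0$ chosen so that the ambient slice at every height $t\ge t_0$ already lies inside $B\subseteq\text{atom of }c$. Below $t_0$ all slices are cylinders over the original $U_i$, so only codewords of $\C$ appear; above $t_0$ every ambient point lies in all $U_i$ with $i\in c$ and in no $U_k$ with $k\notin c$, so the surviving indices form exactly $\sigma$. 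There is no transition region because nothing was hulled.
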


\begin{lemma}\label{lem:Cn3}
The $i$-th covered code of $\C_n$  is convex for $i=2n+2$.
\end{lemma}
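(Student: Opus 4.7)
First I would identify $\C_n^{(2n+2)}$ explicitly. Setting $T_j = \Tk_{\C_n}(j)$, a direct computation shows that the intersections $T_j\cap T_{2n+2}$ appearing in Definition \ref{def:coveredcode} take only two values: for $1\le j\le n+1$ each equals the singleton $\{[n]\cup\{n+1\}\cup\{2n+2\}\}$, and for $n+2\le j\le 2n+1$ each equals $\{\{n+2,\ldots,2n+2\}\}$. Both singletons are already generated by simple trunks other than $T_{2n+2}$, namely $T_1\cap\cdots\cap T_n$ and $T_{n+2}\cap\cdots\cap T_{2n+1}$ respectively, so by Corollary \ref{cor:redundant} we may discard them. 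The remaining morphism, determined by $\{T_1,\ldots,T_{2n+1}\}$, sends $c\mapsto c\setminus\{2n+2\}$. Thus $\C_n^{(2n+2)}$ is obtained from $\C_n$ by deleting the index $2n+2$ from every codeword, and its maximal codewords are $([n]\setminus\{i\})\cup\{n+1\}\cup\{n+1+i\}$ for $1\le i\le n$, together with $[n]\cup\{n+1\}$ and $\{n+2,\ldots,2n+1\}$.

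By the Monotonicity of Convexity theorem, it suffices to exhibit a convex realization whose code lies between some convex subcode of $\C_n^{(2n+2)}$ and $\Delta(\C_n^{(2n+2)})$. The crucial conceptual change from $\C_n$ is that the codeword $[n]\cup\{n+1\}\cup\{2n+2\}$ has been replaced by $[n]\cup\{n+1\}$, so the common intersection of $U_1,\ldots,U_n$ is no longer required to meet the center of the sunflower $\{U_{n+2},\ldots,U_{2n+1}\}$; this severs the forcing relation exploited by Corollary \ref{cor:conv} in the proof of Theorem \ref{thm:nonconvex}.

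I would carry out the construction in $\R^{n+1}$ with coordinates $(x_0,x_1,\ldots,x_n)$. Let $U_{n+1}$ be the open cylinder $\{0<x_0<1,\ \sum_{k=1}^n x_k^2<R^2\}$, and for $1\le i\le n$ let $U_i=\{x\in U_{n+1}:x_i<c\}$ with $R/\sqrt n<c<R$. This choice guarantees that the orthant $\{x_i\ge c\text{ for all }i\}$ is disjoint from $U_{n+1}$, so that $U_1\cup\cdots\cup U_n=U_{n+1}$ while the nerve of $\{U_1,\ldots,U_n\}$ is $2^{[n]}$. Pick points $p_i$ lying in the atom of $([n]\setminus\{i\})\cup\{n+1\}$, say with first coordinate $1/2$, $i$-th coordinate slightly larger than $c$, and all other coordinates zero. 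Place a base point $W$ outside $U_{n+1}$ but with all coordinates $x_1,\ldots,x_n$ strictly greater than $c$, e.g.\ $W=(-1,c+\eta,\ldots,c+\eta)$. Define each petal $U_{n+1+i}$ to be the interior of the convex hull of small open balls $B_\varepsilon(W)$ and $B_\delta(p_i)$.

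The main obstacle is verifying the sunflower condition while keeping each petal disjoint from its forbidden set $U_i$. Since both $W$ and $p_i$ have $i$-th coordinate strictly greater than $c$, convexity of $U_{n+1+i}$ forces $x_i>c$ throughout the petal, so it is disjoint from $U_i$. For the sunflower condition, any point in $U_{n+1+i}\cap U_{n+1+j}$ with $i\neq j$ must simultaneously satisfy $x_i>c$ and $x_j>c$; parameterizing such a point along the segment from $W$ toward $B_\delta(p_i)$ and using $(p_i)_j=0$ constrains it to lie within distance roughly $(\eta/c)\,|p_i-W|$ of $W$. Choosing $\eta$ sufficiently small compared to $\varepsilon$ confines all such pairwise intersections to $B_\varepsilon(W)$, which is itself contained in the common intersection of all petals, so pairwise intersections of petals collapse to the sunflower center as required. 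Any additional atoms produced by the construction correspond to faces of the maximal codewords listed above, hence lie in $\Delta(\C_n^{(2n+2)})$, and Monotonicity of Convexity then concludes the proof.
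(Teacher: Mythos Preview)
Your identification of $\C_n^{(2n+2)}$ as the deletion of index $2n+2$ is correct and matches the paper exactly, and your overall strategy---build an explicit realization and invoke monotonicity of convexity---is also the paper's strategy. The geometric construction, however, is different from the paper's, and two genuine gaps appear.

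First, the sunflower verification for the petals does not go through. You argue that a point in $U_{n+1+i}\cap U_{n+1+j}$ must lie within distance roughly $(\eta/c)\,|p_i-W|$ of $W$, and hence in $B_\varepsilon(W)$ for $\eta\ll\varepsilon$. But the bound you actually get from $x_j>c$ is $t\lesssim(\eta+\varepsilon+\delta)/c$, so the distance to $W$ is at best of order $\varepsilon(1+|p_i-W|/c)$, which exceeds $\varepsilon$ whenever $|p_i-W|>0$. Thus the pairwise intersection is \emph{not} contained in $B_\varepsilon(W)$, and for $n\ge 3$ there is no reason these lens-shaped intersections near $W$ coincide with the full intersection $\bigcap_k U_{n+1+k}$: the cigars point in directions that are nearly parallel (their pairwise cosines are close to $1$), so different pairs of cigars overlap in slightly different regions just outside $B_\varepsilon(W)$. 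A failure of the sunflower condition would introduce codewords such as $\{n+1+i,n+1+j\}$ into your realization, which are not in $\C_n^{(2n+2)}$.

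Second, your use of monotonicity is inverted. You claim it suffices that the code $\D$ of your realization satisfies $\D\subseteq\Delta(\C_n^{(2n+2)})$, but what is needed is $\D\subseteq\C_n^{(2n+2)}\subseteq\Delta(\D)$. Showing that ``additional atoms correspond to faces of the maximal codewords'' only gives $\D\subseteq\Delta(\C_n^{(2n+2)})$; you must instead verify that every atom of your realization is an \emph{actual codeword} of $\C_n^{(2n+2)}$, not merely a face of one. This is exactly where the sunflower failure above would bite.

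By contrast, the paper works in $\R^n$ and takes the petals to be axis-aligned open slabs $U_{n+1+j}=\{0<x_k<1\ \text{for }k\neq j\}$, so that the sunflower condition is automatic (the common intersection is the open unit cube). The set $U_{n+1}$ is then placed as a thin slab in the positive orthant, and each $U_j$ for $j\in[n]$ is the intersection of $U_{n+1}$ with a halfspace. This makes all containments required for monotonicity verifiable by elementary inequalities. Your cylinder-and-halfspace construction for $U_1,\ldots,U_{n+1}$ is perfectly fine; the difficulty is entirely in attaching the petals, and there the paper's axis-parallel slabs are a cleaner choice than thin cigars.
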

\begin{proof}We first claim that, up to isomorphism, $\C_n^{(2n+2)}$ is the result of deleting index $2n+2$. By Corollary \ref{cor:redundant},  it suffices to show that $T_{2n+2}\cap T_j$ is generated by a set of trunks that does not include $T_{2n+2}$ for all $1\le j\le 2n+1$. For $1\le j \le n+1$, observe that $T_{2n+2}\cap T_j$ consists of the single codeword $[n]\cup \{n+1\}\cup \{2n+2\}$. This is generated as the intersection of $T_1\cap T_2\cap \cdots \cap T_{n+1}$. Similarly, for $n+2\le j \le 2n+1$,  $T_{2n+2}\cap T_j$ consists of the single codeword $\{n+2, n+3,\ldots, 2n+2\}$. This is generated as $T_{n+2}\cap T_{n+3}$. Thus $\C_n^{(2n+2)}$ is the result of deleting $2n+2$ from every codeword where it appears in $\C_n$. 

The resulting code is \emph{not} max-intersection complete, since $\{n+1\}$ is an intersection of maximal codewords, but is not present in the code. Thus we must argue for convexity of $\C_n^{(2n+2)}$ more directly. We leverage monotonicity of convexity by constructing a convex realization of a code $\D\subseteq 2^{[2n+1]}$ with $\D\subseteq \C^{(2n+2)}_n\subseteq \Delta(\D)$. We begin by defining this realization.

First, for $n+2\le j\le 2n+1$ define $U_j\subseteq \R^n$ to be the product of open intervals $(0,1)\times (0,1)\times\cdots \times \R\times \cdots \times (0,1)$ where the factor $\R$ appears in the $(j-n-1)$-th index. That is, $U_j$ is the convex set in $\R^n$ defined by the inequalities $0<x_k<1$ for $k\neq j-n-1$. Observe that this collection of $U_j$ form a sunflower whose center is an open unit hypercube with a vertex at the origin. 

Next, for any interval $(a,b)\subseteq \R$ define a convex open set\[
H_{(a,b)} = \{(x_1,\ldots, x_n)\mid a < x_1+x_2+\cdots + x_n < b\}.
\]
Thus, the set  $H_{(a,b)}$ is just a union of translates of the hyperplane given by the equation $x_1+\cdots +x_n = 0$. Now, let $U_{n+1}$ be the intersection of $H_{(2n-1, 2n)}$ with the positive orthant. Observe that $U_{n+1}$ intersects $U_j$ for $n+2\le j \le 2n+1$, for example at the point $(\frac{1}{2},\frac{1}{2},\ldots, \frac{3n}{2},\ldots, \frac{1}{2})$ where the term $\frac{3n}{2}$ appears in the $(j-n-1)$-th coordinate. However, $U_{n+1}$ does not intersect the center of the sunflower $\{U_j\mid n+2\le j\le 2n+1\}$ since the sum of coordinates of any point in the center is always strictly less than $n$. 

We next define the $U_j$ for $j\in [n]$. For $j\in [n]$, let $H^-_j$ be the open halfspace defined by $\sum_{k\in [n]\setminus \{j\}} x_k > n-1$, and let $U_j = U_{n+1}\cap H_j^-$.

 This realization is illustrated below in the case $n=2$. In the figure $U_3$ is the entire diagonal set, $U_1$ is everything in $U_3$ strictly above $U_4$, and $U_2$ is everything in $U_3$ strictly to the right of $U_5$. 

\[\includegraphics[scale=0.6]{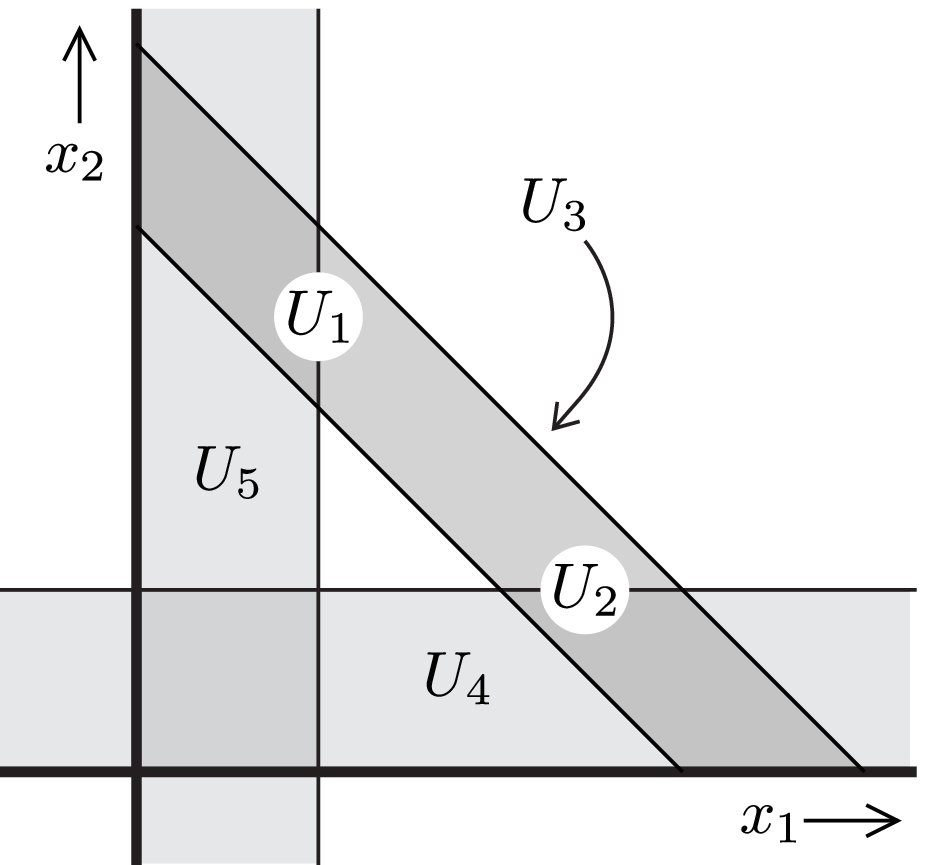}\]

  Let $\D = \code(\{U_1,\ldots, U_{2n+1}\}, \R^n)$. We claim that $\D\subseteq \C_n^{(2n+2)}\subseteq \Delta(\D)$. First, let us describe explicitly the codewords of $\C_n^{(2n+2)}$. These are: \begin{itemize}
\item[(i)] The empty set, 
\item[(ii)] All codewords of the form $\sigma\cup \{n+1\}$ for $\sigma$ a nonempty proper subset of $[n]$.
\item[(iii)] $\{n+1+j\}$ for $1\le j \le n$, 
\item[(iv)] $([n]\setminus \{i\})\cup \{n+1\}\cup \{n+1+j\} $ for $1\le j \le n$,
\item[(v)] the codeword $[n]\cup \{n+1\}$, and
\item[(vi)] the codeword $\{n+2, n+3, \ldots, 2n+1\}$. 
\end{itemize}
Observe that the maximal codewords are those of types (iv), (v), and (vi). 

To see that $\D\subseteq \C_n^{(2n+2)}\subseteq \Delta(\D)$ we determine the codewords of $\D$. We start by focusing on the codewords that  do not contain $n+1$. We claim  that such codewords are precisely those of types (i), (iii), and (vi) in $\C_n^{(2n+2)}$: the empty set arises from any point with all coordinates sufficiently large and negative, codewords of type (iii) arise from points in $U_{n+1+j}$ with large $j$-th coordinate, and the codeword of type (vi) arises from any point in the center of the sunflower $\{U_j\mid n+2\le j\le 2n+1\}$. No other codewords not involving $n+1$ occur in $\D$  since $U_j\subseteq U_{n+1}$ for all $j\in [n+1]$, and the sets $U_j$ for $n+2\le j\le 2n+1$ form a sunflower. Note that all codewords in $\D$ we have examined thus far are codewords of $\C_n^{(2n+2)}$. 

Next we turn to the codewords that involve $U_{n+1}$, or equivalently, we examine the atoms of our realization that are contained in $U_{n+1}$. We make three claims:\begin{itemize}
\item[(1)] For all $j\in [n]$, the set $U_{n+1+j}$ intersects $U_{n+1}$ only at points contained in $U_{[n]\setminus j}\setminus U_j$.
\item[(2)] The sets $U_j$ for $j\in[n]$ cover $U_{n+1}$.
\item[(3)] $U_{[n+1]}$ is nonempty.
\end{itemize}

For (1), we begin by arguing that any point in the intersection $U_{n+1}\cap U_{n+1+j}$ must have $\sum_{k\in [n]\setminus \{j\}} x_k < n-1$, and so it is not in $U_j$. Indeed, this is immediate from the fact that $x_k<1$ for all $k\in [n]\setminus \{j\}$. Moreover, such a point satisfies\[
2n-1< \sum_{k\in[n]} x_k < x_j + n-1
\]
which implies that $x_j> n$. This shows that our point is in $U_k$ for $k\in [n]\setminus \{j\}$, proving the first claim. 

The second claim can be argued as follows: any point in $U_{n+1}$ by definition has \[
2n-1< \sum_{k\in [n]} x_j < 2n
\]
with all $x_j$ positive. This implies that there is $j\in[n]$ with  $x_j<2$. Deleting this $x_j$ from the sum, this yields $\sum_{k\in[n]\setminus \{j\}} x_k >  2n-3$. But since $n\ge2$, we know that $2n-3\ge n-1$. Therefore $\sum_{k\in[n]\setminus \{j\}} x_k > n-1$, and our chosen point lies in $U_j$. This proves that $\{U_j\mid j\in [n]\}$ covers $U_{n+1}$, so the second claim holds. 

Finally, we argue (3). Consider the point whose coordinates are all equal to $2 - \frac{1}{2n}$. This lies in $U_{n+1}$ since the sum of its coordinates is $2n-\frac{1}{2}$. It also lies in $U_j$ for all $j\in [n]$, since the sum of any $n-1$ of its coordinates is equal to $2n+\frac{1}{2n}-\frac{5}{2}$ which is larger than $n-1$ since $n \ge 2$. Thus $U_{[n+1]}$ is nonempty.

Now, (1), (2), and (3) imply that any codeword in $\D$ that contains $n+1$ is one of the codewords of type (ii), type (iv), or type (v) in $\C_n^{(2n+2)}$. From this and our discussion of codewords in $\D$ that do not contain $n+1$, we infer that $\D\subseteq  \C_n^{(2n+2)}$. Moreover, we saw that codewords of types (iv), (v), and (vi) all arise in $\D$. Since these are the maximal codewords of $\C_n^{(2n+2)}$ we conclude that $\C_n^{(2n+2)}\subseteq \Delta(\D)$. Monotonicity of convexity implies that $\C_n^{(2n+2)}$ is a convex code, proving the result.
\end{proof}

\begin{theorem}\label{thm:mnc}
$\C_n$ is a minimally non-convex code.
\end{theorem}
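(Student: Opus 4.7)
The plan is to assemble Theorem \ref{thm:mnc} as a direct combination of the results already established in this section. First I would invoke Theorem \ref{thm:nonconvex} to conclude that $\C_n$ itself is not convex, which handles one half of the definition of minimally non-convex. What remains is to show that every isomorphism class $[\D]<[\C_n]$ in $\ParCode$ is convex.

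Next I would use the fact that convex codes form a down-set in $\ParCode$ (the main result of \cite{morphisms} cited earlier): once we know convexity for every $[\D]$ covered by $[\C_n]$, convexity propagates downward to all $[\D]<[\C_n]$. So it suffices to classify the codes covered by $[\C_n]$ and verify each is convex. Before doing so, I would note briefly that $\C_n$ is reduced: each index $i\in[2n+2]$ appears in at least one codeword (so no index is trivial), and no simple trunk $\Tk_{\C_n}(i)$ is generated as an intersection of the others (for $i\in[n]$ only the codeword in (v) contains all of $[n]$; for $i=n+1$ the codewords of type (ii) distinguish it; for $n+2\le i\le 2n+2$, the singleton $\{i\}$ appears in $\Tk_{\C_n}(i)$ but in no other simple trunk). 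This allows us to apply Theorem \ref{thm:cover}, which says that any $[\D]$ covered by $[\C_n]$ is either isomorphic to a simple trunk of $\C_n$ or to $\C_n^{(i)}$ for some $i\in[2n+2]$.

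Convexity of each of these candidates is precisely the content of the four lemmas already proved: Lemma \ref{lem:Cnsimpletrunks} handles all simple trunks; Lemma \ref{lem:Cn1} handles $\C_n^{(i)}$ for $1\le i\le n+1$; Lemma \ref{lem:Cn2} handles $\C_n^{(i)}$ for $n+2\le i\le 2n+1$; and Lemma \ref{lem:Cn3} handles the remaining case $i=2n+2$. Collecting these, every $[\D]$ covered by $[\C_n]$ is convex, hence by the down-set property every $[\D]<[\C_n]$ is convex, and $\C_n$ is minimally non-convex.

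At this stage the theorem is essentially a bookkeeping exercise, and there is no serious obstacle remaining — all of the real work has been done in Theorem \ref{thm:nonconvex} (non-convexity, which uses the sunflower theorem and Corollaries \ref{cor:conv} and \ref{cor:plane}) and in the case analysis of Lemmas \ref{lem:Cnsimpletrunks}--\ref{lem:Cn3}. The one point worth double-checking while writing it up is that Theorem \ref{thm:cover} really does enumerate all covers (so that no further cases are missed); this in turn relies on $\C_n$ being reduced, which is why I would isolate the reducedness verification explicitly before applying the covering classification.
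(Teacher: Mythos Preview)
Your proposal is correct and follows essentially the same approach as the paper: invoke Theorem \ref{thm:nonconvex} for non-convexity, then use Theorem \ref{thm:cover} together with Lemmas \ref{lem:Cnsimpletrunks}--\ref{lem:Cn3} to verify that every code covered by $[\C_n]$ is convex. Your write-up is in fact more careful than the paper's own (very terse) proof, since you make explicit both the reducedness of $\C_n$ (needed to apply Theorem \ref{thm:cover}) and the appeal to the down-set property to pass from covered codes to all $[\D]<[\C_n]$.
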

\begin{proof}
We argued in Theorem \ref{thm:nonconvex} that $\C_n$ is not convex. In Lemmas \ref{lem:Cnsimpletrunks}, \ref{lem:Cn1}, \ref{lem:Cn2}, and \ref{lem:Cn3} we argued that every code that $\C_n$ covers in $\ParCode$ is convex. Together these facts imply that $\C_n$ is minimally non-convex. 
\end{proof}

\section{Conclusion}\label{sec:conclusion}

Theorem \ref{thm:sunflower} provides a description of the structure of single sunflower of convex open sets. Informally, this theorem captures a certain ``rigidity" of these sunflowers. This leads to a number of natural questions, some of which are posed below.

\begin{question}
\blue{In Section \ref{sec:proof}, we described how Corollary \ref{cor:conv} can be thought of as a partial converse to the colorful Carath\'eodory theorem. Are there collections of convex open sets which yield partial converses to other classical convexity theorems, such as Tverberg's theorem or generalizations of Helly's theorem?}
\end{question}

\begin{question}
Given several sunflowers of convex open sets, do there exist restrictions on how their petals can intersect? Can such restrictions be expressed independent of the dimension of the sets?
\end{question}

Answers to these questions are interesting in their own right, and in addition they may provide further examples of non-convex combinatorial codes. It is worth noting that the minimally non-convex codes $\C_n$ of Section \ref{sec:Cn} have the following property: they are only missing one max-intersection, which consists of the single index $n+1$. All other known examples of minimally non-convex codes have nerve obstructions (see \cite{CUR}), and these examples have the property that they are again missing a single max-intersection consisting of the empty set. 

This structure raises the following question. Given a minimally non-convex code $\C$, can $\C$ be missing more than one max-intersection? More generally, what is the structure of the max-intersections that are missing from $\C$? We conjecture that minimally non-convex codes  have exactly one missing  max-intersection.

\begin{conjecture}
Let $\C$ be a minimally non-convex code, and let $\widehat{M(\C)}$ be the set of intersections of maximal codewords of $\C$. Then $\widehat{M(\C)}\setminus \C$ contains only one element. 
\end{conjecture}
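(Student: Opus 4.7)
The plan is to argue by contrapositive: assuming $|\widehat{M(\C)}\setminus\C|\neq 1$, I would show that $\C$ is not minimally non-convex. The case $\widehat{M(\C)}\setminus\C = \emptyset$ is immediate, since then $\C$ is max-intersection complete and thus convex by the result of Cruz et al.\ cited in the paper, hence not non-convex at all. The substantive case is therefore $|\widehat{M(\C)}\setminus\C|\geq 2$.

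So assume $\sigma_1\neq \sigma_2$ both lie in $\widehat{M(\C)}\setminus \C$, each arising as an intersection of maximal codewords. The goal is to produce some $[\D]<[\C]$ in $\ParCode$ with $\D$ still non-convex. My first attempt would be to use the covered codes $\C^{(i)}$: by Lemma \ref{lem:imagemic} (and in the spirit of Corollary \ref{cor:Cimic}) one can read off $\widehat{M(\C^{(i)})}\setminus\C^{(i)}$ directly from the trunks $\{T_j:j\neq i\}\cup\{T_j\cap T_i : T_j\cap T_i\neq T_i\}$ defining the morphism $\C\to \C^{(i)}$. Choosing $i$ in the symmetric difference $\sigma_1\triangle \sigma_2$ (or an index that is present in one maximal codeword of the $\sigma_1$-decomposition but not the $\sigma_2$-decomposition, or vice versa), one should be able to arrange that one of the two missing max-intersections is ``filled in'' in $\C^{(i)}$, while the other persists. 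A careful case analysis based on how $\sigma_1,\sigma_2$ relate (whether they are comparable in the Boolean lattice, whether they share all their maximal codeword witnesses, etc.) should at least produce a code strictly below $\C$ that still has a nonempty $\widehat{M(\cdot)}\setminus(\cdot)$.

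The main obstacle, and in my view the reason the conjecture is stated as a conjecture and not a theorem, is the final step: verifying that the resulting code is genuinely \emph{non-convex}. Having a missing max-intersection is necessary but far from sufficient for non-convexity. To close the argument one would need to transport a geometric obstruction from $\C$ to $\C^{(i)}$. Concretely, if the non-convexity of $\C$ comes from a sunflower configuration as in Theorem \ref{thm:sunflower} (as it does for the codes $\C_n$), then one would attempt to choose $i$ so that the indices participating in the sunflower obstruction are disjoint from $\{i\}$, so that the sunflower and its associated hyperplane-avoidance configuration persist into $\C^{(i)}$. Likewise, if the obstruction is a nerve obstruction at some face $\tau$ with $i\notin \tau$, it should survive the deletion/merging of the index $i$.

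The fundamental difficulty is that no general classification of non-convexity obstructions is currently available: there may in principle exist minimally non-convex codes whose non-convexity is witnessed by neither a nerve obstruction nor a sunflower obstruction, and for such a hypothetical code the transport argument above has nothing to latch onto. Thus the realistic version of this proposal is conditional: under the hypothesis that every non-convex code has either a nerve obstruction or a sunflower obstruction localized to a proper subset of indices, the case analysis of $\sigma_1,\sigma_2$ sketched above should close the argument. Settling the conjecture unconditionally appears to require first sharpening our understanding of what non-convexity obstructions look like, which I expect to be the core technical hurdle.
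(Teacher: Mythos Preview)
The statement you are addressing is a \emph{Conjecture} in the paper, placed in the concluding section as an open problem; the paper gives no proof and does not claim one. So there is no ``paper's own proof'' to compare against.

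Your write-up is not a proof either, and you say so yourself: it is a sketch of a conditional strategy together with an honest diagnosis of why it does not close. That diagnosis is accurate. The easy observation that $\widehat{M(\C)}\setminus\C=\emptyset$ forces max-intersection completeness, hence convexity, hence contradicts the hypothesis, is correct and disposes of one side. For $|\widehat{M(\C)}\setminus\C|\ge 2$, your idea of passing to a covered code $\C^{(i)}$ so that one missing max-intersection is filled while another persists is a natural first move, but two genuine gaps remain. First, even the combinatorial step is not fully justified: Corollary~\ref{cor:Cimic} treats only the single-missing-intersection case, and it is not clear that for arbitrary $\sigma_1,\sigma_2\in\widehat{M(\C)}\setminus\C$ one can always choose $i$ so that exactly one of them survives in $\C^{(i)}$. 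Second, and more seriously, you correctly identify that a surviving missing max-intersection in $\C^{(i)}$ does not certify non-convexity; one would need to transport an actual geometric obstruction downward, and absent a classification of such obstructions there is nothing to transport. The paper's own family $\C_n$ already shows that non-convexity can arise from phenomena (sunflower obstructions) invisible to all known combinatorial tests, which is precisely why the statement is posed as a conjecture.

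In summary: your proposal should be read as a discussion of the difficulty, not as a proof attempt, and on that reading it is sound. Just be sure not to present it as settling the conjecture.
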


\section*{Acknowledgements}

\blue{We extend thanks to our referees for insightful feedback and suggestions, including the connection between Corollary \ref{cor:conv} and B\'ar\'any's colorful Carath\'eodory theorem.} We thank Isabella Novik for detailed and illuminating feedback on a number of drafts of this paper. We also thank Zvi Rosen for sharing an independent proof of Theorem \ref{thm:sunflower}. Our work was partially supported by the NSF graduate fellowship grant DMS-1664865. 

\bibliographystyle{plain}
\bibliography{neuralcodereferences}

\end{document}